\documentclass[11pt,a4paper]{amsart}
\usepackage{dsfont}
\usepackage[applemac]{inputenc}
\usepackage{amsmath,amssymb}
\usepackage{hyperref}
\usepackage{ulem}
\usepackage{color}
\usepackage{epsfig}
\usepackage{graphicx,scalerel}
\usepackage{enumerate}
\usepackage{enumitem} 
\allowdisplaybreaks

\hoffset=-1cm
\textwidth=16cm
\textheight=23cm
\newtheorem{prop}{Proposition}
\newtheorem{cor}{Corollary}

\newtheorem{lem}{Lemma}
\newtheorem{theorem}{Theorem}
\newtheorem*{theorem*}{Theorem}

\def \R{\mathbb{R}}
\def \N{\mathbb{N}}
\def \Z{\mathbb{Z}}
\def \T{\mathbb{T}}
\def \I{\mathcal I}
\def \J{\mathcal J}
\def \C{\mathcal C}
\def \A{\mathcal A}
\def \B{\mathcal B}
\def \D{\mathcal D}
\def \F{\mathcal F}
\def \K{\mathcal K}

\def \P{\mathcal P}
\def \G{\mathcal G}

\newcommand{\eps}{\varepsilon}
\newcommand{\Aca}{\mathcal{A}}
\newcommand{\Bca}{\mathcal{B}}
\newcommand{\Fca}{\mathcal{F}}

\def \1{\textbf{1}}
\usepackage{setspace}
\onehalfspacing

\newcommand{\setone}{\mathds{1}}
\newcommand{\E}[1]{\mathbb E\left[#1\right]}
\newcommand{\Ea}[1]{\mathbb E_1\left[#1\right]}
\newcommand{\Eb}[1]{\mathbb E_2\left[#1\right]}
\newcommand{\Ec}[1]{\mathbb E_3\left[#1\right]}

\newcommand{\pr}[1]{\left(#1\right)}
\newcommand{\ind}[1]{{\mathds{1}}_{#1}}
\newcommand{\PP}{\mathbb P}
\newcommand{\norm}[1]{\left\lVert #1\right\rVert}
\newcommand{\abs}[1]{\left\lvert #1\right\rvert}
\newcommand{\ens}[1]{\left\{#1\right\}}

\begin{document}
\pagestyle{plain}
\title{What can be the limit in the CLT for a field of martingale differences ?}

\author{Davide Giraudo, Emmanuel Lesigne, Dalibor Voln\'y}\date{\today}

\begin{abstract}
The now classical convergence in distribution theorem for well normalized sums of
stationary martingale increments has been extended to multi-indexed martingale
increments (see \cite{MR3913270} and references in there). In the present
article we make progress in the identification of the limit law.

In dimension one, as soon as the stationary martingale increments form an ergodic process, the limit law is normal, and it is still
the case for multi-indexed martingale increments when one of the processes defined by one coordinate of the
{\it multidimensional time} is ergodic. In the general case, the limit may be non normal.

The dynamical properties of the $\mathbb{Z}^d$-measure preserving action associated
to the stationary random field allows us to give a necessary and sufficient condition
for the existence of a non-normal limit law, in terms of entropy of some random processes.
The identification of a {\it natural} factor on which the $\mathbb{Z}^d$-action is {\it of product type}
is a crucial step in this approach.
\end{abstract}
\maketitle
\tableofcontents
\section{Introduction}

We study here limit theorems of CLT (Central Limit Theorem) type for
stationary multiparameters martingale indexed by $\Z^d$. In order to
limit the number of suspension points and make the text easier to
read, we choose $d=3$, but all what is said for this particular case
can be extended to any integer $d\geq2$. Some results are specific to
the case $d=2$ (and unknown for $d>2$), in which case they will be
stated (and proved) with $d=2$.\\

We consider a $\Z^3$ measure preserving action  $T=(T_{i,j,k})_{i,j,k\in\Z}$ on a probability space $(\Omega,\A,\mu)$, equipped with a {\it completely commuting invariant filtration} $\left(\F_{i,j,k}\right)_{i,j,k\in\Z}$, that is a family of sub-$\sigma$-algebras of $\A$ satisfying for all $(i,j,k)$ and $(i',j',k')$ in $\Z^3$,
\begin{enumerate}[label=(\roman*)]
\item \label{propriete_invariante} $\F_{i,j,k}=T_{-i,-j,-k}\F_{0,0,0}$ ;
\item\label{propriete_commutativite} For all integrable function
$f$,
$\E{\E{f\,|\,\F_{i,j,k}}\,|\,\F_{i',j',k'}}=\E{f\,|\,\F_{\min(i,
i'),\min(j,
j'),\min(k,k')}}$.
\end{enumerate}
Note that property \ref{propriete_commutativite} implies that $\F_{i,j,k}\cap\F_{i',j',k'}=\F_{\min(i,i'),\min(j,j'),\min(k,k')}$ and in particular $\F_{i,j,k}\subset\F_{i',j',k'}$ when $i\leq i'$, $j\leq j'$ and $k\leq k'$. 

We will use classical notations for the limit sub-$\sigma$ algebras when parameters go to $\pm\infty$ :
$$\F_{-\infty,j,k}=\bigcap_{i\in\Z}\F_{i,j,k}\quad\text{and}\quad\F_{\infty,j,k}=\bigvee_{i\in\Z}\F_{i,j,k},$$
and so on.

A {\it field of martingale differences} is a field of random variables $(X_{i,j,k})_{i,j,k\in\Z}$ of the type 
$$
X_{i,j,k}=f\circ T_{i,j,k}
$$
where $f\in  \mathbb{L}^2(\Omega,\F_{0,0,0},\mu)$ satisfies
\begin{equation}\label{eq:cond-mart}
\E{f\,|\,\F_{-1,\infty,\infty}}=\E{f\,|\,\F_{\infty,-1,\infty}}=\E{f\,|\,\F_{\infty,\infty,-1}}=0.
\end{equation}
(We will say simply that $f$ is a martingale difference adapted to the filtration $(\F_{i,j,k})$)\\

From a previous article \cite{MR3913270}, we know that, for any such field, we 
have convergence in law of
\begin{equation}
\label{eq:sommes_partielles_normalisees}
\frac1{\sqrt{\ell mn}}\sum_{i=1}^\ell\sum_{j=1}^m\sum_{k=1}^n X_{i,j,k}
\end{equation}
when $\min(\ell,m,n)$ goes to infinity.

Moreover the limit is normal as soon as one of the transformations $T_{1,0,0}$, 
$T_{0,1,0}$ or $T_{0,0,1}$ is ergodic (this was already established in  
\cite{MR3427925}), but some easy examples show that the limit is not normal in 
general. Indeed, following the example in \cite{MR3222815}, we can take 
$X_{i,j,k}=U_iV_iW_j$, where
$\pr{U_i}_{i\in\Z}$, $\pr{V_j}_{j\in\Z}$ and $\pr{W_k}_{k\in\Z}$
are three mutually independent i.i.d.\ sequences of standard normal random
variables, then for each
$\ell,m,n$, the random
variable defined by \eqref{eq:sommes_partielles_normalisees} has the
same distribution as the product of three independent random
variables having standard normal distribution. (Note that this example is produced by an ergodic $\Z^3$-action.)\\
Let us also mention the papers \cite{MR3504508} and \cite{MR3869881}, which bring further results and examples.

In all the sequel, we suppose that the $\Z^3$-action $T$ is ergodic on $(\Omega,\A,\mu)$. Moreover, we suppose that $\F_{\infty,\infty,\infty}=\A$ which does not cost anything since the whole process we are interested in is $\F_{\infty,\infty,\infty}$-measurable.

Here is the general organization of this article. 

In Section \ref{product-type factor} we describe a particular factor $\I$ on which the $\Z^3$-action is of {\it product type}. For an action of this type, the possible limit distributions of \eqref{eq:sommes_partielles_normalisees} are fully understood, as described in Section \ref{product-type CLT}.

In Section \ref{dim2}, where we restrict to the case of $\Z^2$-actions, we study what can happen on the orthocomplement of the factor $\I$. In particular we obtain the following results:

- if the transformation $T_{1,0}$ acting on the factor of $T_{0,1}$-invariants has zero entropy (or if the transformation $T_{0,1}$ acting on the factor of $T_{1,0}$-invariants has zero entropy), then for any square integrable martingale difference the limit distribution in the CLT is normal.

- if the transformation $T_{1,0}$ acting on the factor of $T_{0,1}$-invariants and the transformation $T_{0,1}$ acting on the factor of $T_{1,0}$-invariants have positive entropies, then there exists a square integrable martingale difference for which the limit distribution in the CLT is not normal.

\section{A factor of product type}\label{product-type factor}
Let us denote by $\I_1$, $\I_2$ and $\I_3$ the $\sigma$-algebras of, respectively $T_{1,0,0}$, $T_{0,1,0}$ and $T_{0,0,1}$ invariant sets in $\A$. 

Let us denote by $\overline\I_1$, $\overline\I_2$ and $\overline\I_3$ the $\sigma$-algebras of invariant sets under, respectively, $\Z^2$-actions $(T_{0,j,k})$, $(T_{i,0,k})$ and $(T_{i,j,0})$. In other words, we have
$$
\overline\I_1=\I_2\cap\I_3\ ,\quad\overline\I_2=\I_1\cap\I_3\quad\text{and}\quad \overline\I_3=\I_1\cap\I_2.
$$
(Note that, for an action of $\Z^d$ we would have set
$\overline\I_1=\I_2\cap\I_3\cap\dots\cap\I_d$, and that for the
particular case $d=2$, we have $\overline\I_1=\I_2$.)

Finally we note $\I$ the $\sigma$-algebra generated by the union of the $\overline\I_t$ :
$$
\I=\overline\I_1\vee\overline\I_2\vee\overline\I_3\;.
$$

Note that all these $\sigma$-algebras are invariant under the action $T$, hence they define factors of the dynamical system $(\Omega,\A,\mu,T)$.

\begin{prop}\label{indep}
The $\sigma$-algebras $\overline\I_1$, $\overline\I_2$ and $\overline\I_3$ are independent.
\end{prop}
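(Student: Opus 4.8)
The plan is to verify the defining factorizations $\mu\big(\bigcap_{t\in S}A_t\big)=\prod_{t\in S}\mu(A_t)$ for every $A_t\in\overline\I_t$ and every $S\subseteq\{1,2,3\}$. Writing $f_t=\ind{A_t}$, this amounts to showing $\int\prod_{t\in S}f_t\,\d\mu=\prod_{t\in S}\int f_t\,\d\mu$; since the $f_t$ are bounded, no integrability issue arises and everything stays in $\mathbb{L}^2$.

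The core fact is the following: \emph{if $f$ is $\overline\I_1$-measurable, then $\E{f\,|\,\I_1}$ is the constant $\int f\,\d\mu$} (and symmetrically with indices $2$ and $3$). Indeed, being $\overline\I_1=\I_2\cap\I_3$-measurable, $f$ is invariant under $T_{0,1,0}$ and $T_{0,0,1}$. Its conditional expectation $\E{f\,|\,\I_1}$ is $\I_1$-measurable, hence $T_{1,0,0}$-invariant. Moreover, the action being abelian, $T_{0,1,0}$ commutes with $T_{1,0,0}$ and therefore preserves the $\sigma$-algebra $\I_1$; consequently the composition operator commutes with the projection onto $\I_1$, so that $\E{f\,|\,\I_1}\circ T_{0,1,0}=\E{f\circ T_{0,1,0}\,|\,\I_1}=\E{f\,|\,\I_1}$, and likewise for $T_{0,0,1}$. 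Thus $\E{f\,|\,\I_1}$ is invariant under all three generators, hence under the whole $\Z^3$-action, and ergodicity forces it to equal $\int f\,\d\mu$.

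With this in hand I peel off the factors one at a time. To remove $f_1$, I average along the first coordinate: since $\overline\I_s\subseteq\I_1$ for $s\neq1$, both $f_2$ and $f_3$ are $T_{1,0,0}$-invariant, whence
$$
\frac1N\sum_{i=0}^{N-1}(f_1f_2f_3)\circ T_{i,0,0}=f_2f_3\cdot\frac1N\sum_{i=0}^{N-1}f_1\circ T_{i,0,0}.
$$
By the mean ergodic theorem the right-hand side converges in $\mathbb{L}^2$ to $f_2f_3\,\E{f_1\,|\,\I_1}=\left(\int f_1\,\d\mu\right)f_2f_3$, using the core fact. As $T_{1,0,0}$ preserves $\mu$, every term on the left has integral $\int f_1f_2f_3\,\d\mu$, so passing to the limit gives $\int f_1f_2f_3\,\d\mu=\int f_1\,\d\mu\int f_2f_3\,\d\mu$. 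The identical computation with only two factors (and, by the symmetry of the construction, averaging along the appropriate coordinate) yields all the pairwise factorizations; in particular $\int f_2f_3\,\d\mu=\int f_2\,\d\mu\int f_3\,\d\mu$, and combining the two relations produces the triple factorization.

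I expect the only delicate point to be the commutation argument underlying the core fact: one must use that, the $\Z^3$-action being abelian, $T_{0,1,0}$ and $T_{0,0,1}$ genuinely leave $\I_1$ invariant as a $\sigma$-algebra, so that their composition operators commute with $\E{\cdot\,|\,\I_1}$. Everything else reduces to the mean ergodic theorem together with the ergodicity of $T$.
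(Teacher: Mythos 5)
Your proof is correct and follows essentially the same route as the paper's: both rest on ergodic averaging along the coordinate directions together with the fact that the induced one-parameter systems on the factors $\overline\I_t$ are ergodic (your ``core fact'' $\E{f\,|\,\I_1}=\int f\,\d\mu$ for $\overline\I_1$-measurable $f$ is exactly this ergodicity, for which you supply the commutation detail the paper leaves implicit). The only cosmetic difference is that you peel off one factor at a time with single-coordinate averages and identify limits via measure preservation, whereas the paper factors the triple average all at once and identifies its limit through ergodicity of the full $\Z^3$-action.
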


As a consequence of this proposition we can state that the action $T$ on the probability space $(\Omega,\I,\mu)$ is of product type, which means that it is isomorphic to a $\Z^3$-action defined on the product of three probability spaces $(\Omega_t,\A_t,\mu_t)$ ($t=1,2,3$) by a formula of the type
$$
T_{i,j,k}(\omega_1,\omega_2,\omega_3)=(T_1^i\omega_1,T_2^j\omega_2,T_3^k\omega_3)
$$
where each $T_t$ is an invertible measure preserving transformation of $(\Omega_t,\A_t,\mu_t)$.

\begin{proof}[Proof of Proposition \ref{indep}]
Let $A_t\in\overline\I_t$, $t=1,2,3$. By ergodicity of the $\Z^3$-action, we have
$$
\lim_{\ell,m,n\to\infty}\frac1{\ell mn}\sum_{i=1}^\ell\sum_{j=1}^m\sum_{k=1}^n (\setone_{A_1}\setone_{A_2}\setone_{A_3})\circ T_{i,j,k}=\mu(A_1\cap A_2\cap A_3)
$$
but, by invariance,
\begin{multline*}
\frac1{\ell nm}\sum_{i=1}^\ell\sum_{j=1}^m\sum_{k=1}^n (\setone_{A_1}\setone_{A_2}\setone_{A_3})\circ T_{i,j,k}=\\\left(\frac1\ell\sum_{i=1}^\ell\setone_{A_1}\circ T_{i,0,0}\right)\left( \frac1m\sum_{j=1}^m\setone_{A_2}\circ T_{0,j,0}\right)\left( \frac1n\sum_{k=1}^n\setone_{A_3}\circ T_{0,0,k} \right)\\\longrightarrow\mu(A_1)\mu(A_2)\mu(A_3)\;,
\end{multline*}
since each of the three systems $(\Omega,\overline\I_1,\mu,T_{1,0,0})$, $(\Omega,\overline\I_2,\mu,T_{0,1,0})$ and $(\Omega,\overline\I_3,\mu,T_{0,0,1})$ is ergodic.
\end{proof}

\section{Martingale property preserved by projection}\label{proj-mart}
Our aim here is to show that the martingale property is preserved by projection on the factor $\I$. We begin by a general abstract result, which is stated for a $\Z^2$-measure preserving action but which can be proved similarly for any $\Z^{d-1}$-action.

Let $(X,\B,\nu)$ be a probability space and $U=(U_{i,j})$ a $\Z^2$-measure preserving action on this space. We denote by $\J$ the $\sigma$-algebra of $U$-invariant elements of the $\sigma$-algebra $\B$.

We denote by $\F$ a $U$-invariant sub-$\sigma$-algebra of $\B$, meaning that $U_{i,j}(\F)=\F$ for all $i,j\in\Z$~; we denote by $\C$ a sub-$\sigma$-algebra of $\F$.

\begin{prop}\label{pro}${}$

\begin{enumerate}[label=(\alph*)]
  \item\label{assertion_EfF_mea} For all $f\in  \mathbb{L}^2(\J\vee\C)$,
$\E{f\,|\,\F}$ is
$\J\vee\C$-measurable.
\item\label{assertion_F_mes_J_vee_C} For all $f\in  \mathbb{L}^2(\F)$,
$\E{f\,|\,\J\vee\C}$ is
$\F$-measurable.
\end{enumerate}
\end{prop}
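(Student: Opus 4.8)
The plan is to establish assertion (a) first, by reducing to elementary product functions, and then to deduce assertion (b) from (a) via a duality argument for orthogonal projections. Throughout I view $\E{\cdot\mid\F}$ and $\E{\cdot\mid\J\vee\C}$ as the orthogonal projections of $\mathbb{L}^2(X,\B,\nu)$ onto the closed subspaces $\mathbb{L}^2(\F)$ and $\mathbb{L}^2(\J\vee\C)$. For (a) I would first record the standard fact (a consequence of the functional monotone class theorem) that the linear span of products $g\,h$, with $g\in\mathbb{L}^\infty(\J)$ and $h\in\mathbb{L}^\infty(\C)$, is dense in $\mathbb{L}^2(\J\vee\C)$, since such products generate $\J\vee\C$. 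Because $\E{\cdot\mid\F}$ is an $\mathbb{L}^2$-contraction and $\mathbb{L}^2(\J\vee\C)$ is closed, the set of $f$ for which $\E{f\mid\F}$ is $\J\vee\C$-measurable is a closed linear subspace; hence it suffices to treat $f=g\,h$.

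For such an $f$, since $h$ is $\C$-measurable and $\C\subset\F$, the factor $h$ pulls out: $\E{g\,h\mid\F}=h\,\E{g\mid\F}$. The point to stress is that $g$ itself cannot be pulled out, because $\J$ is \emph{not} contained in $\F$ in general; so it remains to show separately that $\E{g\mid\F}$ is $\J$-measurable. This is the heart of the matter and the only place the hypotheses on $U$ are used: $g$ is $U$-invariant and $\F$ is a $U$-invariant sub-$\sigma$-algebra, so the measure-preserving character of each $U_{i,j}$ gives
$$
\E{g\mid\F}\circ U_{i,j}=\E{g\circ U_{i,j}\mid U_{i,j}^{-1}\F}=\E{g\mid\F}.
$$
Thus $\E{g\mid\F}$ is $U$-invariant, hence $\J$-measurable, and the product $h\,\E{g\mid\F}$ is $\J\vee\C$-measurable. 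Passing to the closure proves (a) in full.

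For (b), rather than repeating a more delicate direct argument, I would exploit the symmetry of orthogonal projections. Writing $P$ for the projection onto $\mathbb{L}^2(\F)$ and $Q$ for the projection onto $\mathbb{L}^2(\J\vee\C)$, assertion (a) says exactly that $P$ sends the range of $Q$ into itself, i.e. $QPQ=PQ$. Taking adjoints of this identity (both $P$ and $Q$ are self-adjoint) yields $QPQ=QP$, and comparing gives $PQ=QPQ=QP$, so the two projections commute. Consequently $Q$ maps the range of $P$ into itself, which is precisely the statement that $\E{f\mid\J\vee\C}$ is $\F$-measurable for every $f\in\mathbb{L}^2(\F)$. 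I expect the only genuinely non-formal steps to be the density reduction and the invariance computation in (a); once those are in place, assertion (b) is essentially free, and the main conceptual obstacle is simply recognizing that (a) and (b) are the two sides of a single commuting-projections statement.
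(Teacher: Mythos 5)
Your proof is correct. Part \ref{assertion_EfF_mea} follows the paper's argument essentially verbatim: density of the span of products $gh$ with $g\in\mathbb{L}^\infty(\J)$, $h\in\mathbb{L}^\infty(\C)$, pulling $h$ out of $\E{\cdot\,|\,\F}$ because $\C\subset\F$, and the invariance computation $\E{g\,|\,\F}\circ U_{i,j}=\E{g\circ U_{i,j}\,|\,U_{i,j}^{-1}\F}=\E{g\,|\,\F}$ showing $\E{g\,|\,\F}$ is $\J$-measurable; you correctly identify this as the only place the dynamics enters. Where you genuinely diverge is part \ref{assertion_F_mes_J_vee_C}. The paper proves it by a second direct computation: the chain of inner-product identities $\langle \E{f\,|\,\J\vee\C},gh\rangle=\dots=\langle \E{f\,|\,\J\vee\C},\E{gh\,|\,\F}\rangle$, extended by density to all $k\in\mathbb{L}^2(\J\vee\C)$ and then specialized to $k=\E{f\,|\,\J\vee\C}$, so that the equality case in $\langle k,\E{k\,|\,\F}\rangle\leq\norm{k}_2^2$ forces $\F$-measurability. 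You instead invoke the abstract fact that if an orthogonal projection $P$ leaves the range of another orthogonal projection $Q$ invariant, then $QPQ=PQ$, taking adjoints gives $QPQ=QP$, hence $PQ=QP$ and $Q$ leaves the range of $P$ invariant. This is valid ($\E{\cdot\,|\,\F}$ and $\E{\cdot\,|\,\J\vee\C}$ are indeed self-adjoint idempotents on $\mathbb{L}^2(\B)$) and is arguably cleaner: it makes \ref{assertion_F_mes_J_vee_C} a formal consequence of \ref{assertion_EfF_mea}, avoids repeating the density argument, and delivers the commutation relation $PQ=QP$ that the paper only extracts afterwards in Corollary \ref{co}; what it does not immediately give is the identification of the product with $\E{\cdot\,|\,\F\cap(\J\vee\C)}$, for which one still needs the standard observation that a product of two commuting orthogonal projections is the orthogonal projection onto the intersection of their ranges.
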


\begin{cor}\label{co}
The conditional expectations with respect to $\F$ and to $\J\vee\C$ are commuting : for all $f\in  \mathbb{L}^2(\B)$,
$$
\E{\E{f\,|\,\J\vee\C}\,|\,\F}=\E{\E{f\,|\,\F}\,|\,\J\vee\C}=\E{f\,|\,\F\cap(\J\vee\C)}.
$$
\end{cor}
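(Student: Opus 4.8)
The plan is to work in $\mathbb{L}^2(\B)$ and to exploit that both conditional expectations are orthogonal projections. Write $P=\E{\cdot\,|\,\F}$ for the orthogonal projection of $\mathbb{L}^2(\B)$ onto $\mathbb{L}^2(\F)$ and $Q=\E{\cdot\,|\,\J\vee\C}$ for the orthogonal projection onto $\mathbb{L}^2(\J\vee\C)$; both are self-adjoint and idempotent. The corollary amounts to the operator identity $PQ=QP=R$ on $\mathbb{L}^2(\B)$, where $R=\E{\cdot\,|\,\F\cap(\J\vee\C)}$.

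First I would establish that $P$ and $Q$ commute, and this is where Proposition \ref{pro} enters. By assertion \ref{assertion_F_mes_J_vee_C}, the operator $Q$ maps $\mathbb{L}^2(\F)=\mathrm{Ran}(P)$ into itself: for $f\in\mathbb{L}^2(\F)$ the function $Qf=\E{f\,|\,\J\vee\C}$ is again $\F$-measurable. Consequently, for every $f\in\mathbb{L}^2(\B)$ the function $QPf$ is $\F$-measurable, so that $P(QPf)=QPf$, i.e.\ $PQP=QP$ as operators. Taking adjoints, and using that $P$ and $Q$ are self-adjoint, so that $(PQP)^*=PQP$ and $(QP)^*=PQ$, yields $PQP=PQ$; combining the two gives $PQ=QP$. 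Note that assertion \ref{assertion_EfF_mea} gives the symmetric statement that $P$ leaves $\mathrm{Ran}(Q)$ invariant, which is a second, equivalent route to commutation.

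Next, commutation makes $S:=PQ=QP$ an orthogonal projection: it is self-adjoint since $S^*=(PQ)^*=QP=S$, and idempotent since $S^2=PQPQ=P^2Q^2=PQ=S$. Its range is exactly $\mathrm{Ran}(P)\cap\mathrm{Ran}(Q)=\mathbb{L}^2(\F)\cap\mathbb{L}^2(\J\vee\C)$: indeed $Sg=g$ forces $g=Pg\in\mathbb{L}^2(\F)$ and $g=Qg\in\mathbb{L}^2(\J\vee\C)$, while any $g$ in that intersection is fixed by both $P$ and $Q$, hence by $S$.

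It remains to identify $S$ with $R$. That $S$ has the correct conditional expectation values on $\F\cap(\J\vee\C)$ is a short computation: for $A\in\F\cap(\J\vee\C)$ one has $\setone_A\in\mathbb{L}^2(\F)\cap\mathbb{L}^2(\J\vee\C)$, so $P\setone_A=Q\setone_A=\setone_A$, and then for any $f$, using self-adjointness, $\int_A Sf\,\d\nu=\langle\setone_A,PQf\rangle=\langle P\setone_A,Qf\rangle=\langle\setone_A,Qf\rangle=\langle Q\setone_A,f\rangle=\langle\setone_A,f\rangle=\int_A f\,\d\nu$. Thus $S=R$ as soon as one knows that $Sf$ is $\F\cap(\J\vee\C)$-measurable, equivalently that $\mathbb{L}^2(\F)\cap\mathbb{L}^2(\J\vee\C)=\mathbb{L}^2(\F\cap(\J\vee\C))$. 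I expect this measure-theoretic identification to be the main obstacle: a function almost surely equal both to an $\F$-measurable function and to a $(\J\vee\C)$-measurable function must be shown to be almost surely equal to a function measurable for the intersection $\sigma$-algebra. This is precisely the classical statement that commuting conditional expectations compose to the conditional expectation onto the intersection; I would either invoke it directly or, to be self-contained, verify by hand the required coincidence of the relevant $\nu$-completed $\sigma$-algebras, checking that the level sets of $Sf$ lie in $\F$ and in $\J\vee\C$ simultaneously and hence in their intersection. Once this is in place, the two compositions $\E{\E{f\,|\,\J\vee\C}\,|\,\F}$ and $\E{\E{f\,|\,\F}\,|\,\J\vee\C}$ are both equal to $Sf=\E{f\,|\,\F\cap(\J\vee\C)}$, as claimed.
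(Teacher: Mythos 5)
Your proof is correct, but it takes a genuinely different route from the paper's. You argue operator-theoretically: assertion \ref{assertion_F_mes_J_vee_C} of Proposition \ref{pro} shows that $Q$ preserves $\mathrm{Ran}(P)$, whence $PQP=QP$, and taking adjoints gives commutation; then $S=PQ$ is an orthogonal projection onto $\mathbb{L}^2(\F)\cap\mathbb{L}^2(\J\vee\C)$, which must finally be identified with $\mathbb{L}^2(\F\cap(\J\vee\C))$. The paper never establishes commutation as a preliminary step: for sub-$\sigma$-algebras $\D,\D'$ such that $\E{\cdot\,|\,\D'}$ preserves $\mathbb{L}^2(\D)$, it writes the tower identity $\E{f\,|\,\D\cap\D'}=\E{\E{\E{f\,|\,\D}\,|\,\D'}\,|\,\D\cap\D'}$ and observes that the inner iterated expectation is already $\D\cap\D'$-measurable, so the outer projection acts as the identity on it; applying this once with $(\D,\D')=(\J\vee\C,\F)$ and once with the roles exchanged (each time invoking one of the two assertions of Proposition \ref{pro}) yields both equalities directly. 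Your route buys generality and transparency (it is the standard commuting-projections lemma); the paper's buys brevity, since anchoring the computation to $\E{\cdot\,|\,\D\cap\D'}$ from the start reduces everything to a single measure-theoretic input, namely that a function admitting both a $\D$-measurable and a $\D'$-measurable version admits, modulo $\nu$-null sets, a $\D\cap\D'$-measurable one. That input is exactly the step you honestly flag as ``the main obstacle'' and propose to settle via completed $\sigma$-algebras; the paper uses the very same fact silently when it asserts that $\E{\E{f\,|\,\D}\,|\,\D'}$ is $\D\cap\D'$-measurable (for raw $\sigma$-algebras this can fail, so it must indeed be read modulo null sets, as is standard in this ergodic-theoretic setting). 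Your proof is therefore no less complete than the paper's on this point, only more explicit about where the subtlety lies.
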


\begin{proof}[Proof of Corollary \ref{co}] 
Suppose that $\D$ and $\D'$ are two sub-$\sigma$-algebras such that, for all $f\in  \mathbb{L}^2(\D)$, $\E{f\,|\,\D'}$ is $\D$-measurable. By ordinary properties of projections, we have, for any $f\in  \mathbb{L}^2(\B)$
$$
\E{f\,|\,\D\cap\D'}=\E{\E{\E{f\,|\,\D}\,|\,\D'}\D\cap\D'}
$$
which implies
$$
\E{f\,|\,\D\cap\D'}=\E{\E{f\,|\,\D}\,|\,\D'}
$$
since $\E{\E{f\,|\,\D}\,|\,\D'}$ is $\D\cap\D'$-measurable.
\end{proof}
\begin{proof}[Proof of Proposition \ref{pro}]
Let $f\in  \mathbb{L}^2(\F)$ ; by the ergodic theorem, we have $$
\E{f\,|\,\J}=\lim_{\ell,m\to\infty}\frac1{\ell m}\sum_{i=1}^\ell \sum_{j=1}^mf\circ U_{i,j}\;,$$
hence $\E{f\,|\,\J}$ is $\F$-measurable, since $\F$ is $U$-invariant.

For any $f\in  \mathbb{L}^2(\B)$, we have
$$
\E{f\,|\,\F}\circ U_{i,j}=\E{f\circ U_{i,j}\,|\, U_{i,j}^{-1}(\F)}=\E{f\circ U_{i,j}\,|\,\F},
$$
hence if $f$ is $\J$-measurable, then $\E{f\,|\,\F}$ is $\J$-measurable.

Consider now $g\in  \mathbb{L}^2(\J)$ and $h\in  \mathbb{L}^2(\C)$. 
Since $\C\subset \F$, we have
$ 
\E{gh\,|\,\F}=\E{g\,|\,\F} h
$, so $\E{gh\,|\,\F}$ is $\J\vee\C$-measurable.
But the functions of the form $gh$ with $g\in  \mathbb{L}^\infty(\J)$ and $h\in  \mathbb{L}^\infty(\C)$ generate a dense subspace of $ \mathbb{L}^2(\J\vee\C)$, so that assertion \ref{assertion_EfF_mea} is proved.\\

Consider now $f\in  \mathbb{L}^2(\F)$, $g\in  \mathbb{L}^2(\J)$ and $h\in  \mathbb{L}^2(\C)$. We have
\begin{align*}
\langle \E{f\,|\,\J\vee\C},gh\rangle &=\langle f,gh\rangle\quad\text{(because $gh$ is $\J\vee\C$-measurable)},\\
&=\langle f,\E{gh\,|\,\F}\rangle\quad\text{(because $f$ is $\F$-measurable)},\\
&=\langle f,h\E{g\,|\,\F}\rangle\quad\text{(because $h$ is $\F$-measurable)},\\
&=\langle \E{f\,|\,\J\vee\C},h\E{g\,|\,\F}\rangle \quad\text{(we know that $\E{g\,|\,\F}$ is $\J$-measurable)},\\
&=\langle \E{f\,|\,\J\vee\C},\E{gh\,|\,\F}\rangle.
\end{align*}
By the density argument, we conclude that, for all $k\in  \mathbb{L}^2(\J\vee\C)$, $$\langle \E{f\,|\,\J\vee\C},k)\rangle=\langle \E{f\,|\,\J\vee\C},\E{k\,|\,\F}\rangle.$$ This identity applied to the function $k= \E{f\,|\,\J\vee\C}$ shows that this function is $\F$-measurable. This is assertion \ref{assertion_F_mes_J_vee_C}.
\end{proof}

Another result we need is the following classical lemma and we give a short proof for the sake of completeness.
\begin{lem} \label{class} Let $S$ be a measure preserving transformation of the probability space $(X,\B,\nu)$, and $\F_n = S^{-n}\F_0$ be an increasing filtration in $\B$. Denote by $\K$ the sub-$\sigma$-algebra of $S$ invariant sets. Then $\K \cap \F_{\infty}=\K \cap \F_{-\infty}$.
In particular, if $\K \subset \F_{\infty}$ then $\K \subset \F_{-\infty}$.
\end{lem}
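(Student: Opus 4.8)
The plan is to prove the nontrivial inclusion $\K\cap\F_\infty\subseteq\K\cap\F_{-\infty}$, the reverse being immediate since $\F_{-\infty}\subseteq\F_\infty$. So I fix a set $A\in\K\cap\F_\infty$ and aim to show that $A$ is in fact $\F_{-\infty}$-measurable; combined with $A\in\K$ this gives $A\in\K\cap\F_{-\infty}$. The ``in particular'' clause then follows instantly: if $\K\subseteq\F_\infty$ then every $A\in\K$ lies in $\K\cap\F_\infty=\K\cap\F_{-\infty}$, whence $\K\subseteq\F_{-\infty}$.

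The first key step is to track the conditional expectations $\phi_n:=\E{\setone_A\,|\,\F_n}$ along the orbit of $S$. Using that $S$ is measure preserving, that $\F_{n}=S^{-1}\F_{n-1}$, and that $A$ is $S$-invariant (so $\setone_A\circ S=\setone_{S^{-1}A}=\setone_A$), the standard commutation rule $\E{g\,|\,\mathcal G}\circ S=\E{g\circ S\,|\,S^{-1}\mathcal G}$ yields $\phi_{n-1}\circ S=\phi_n$, hence $\phi_0\circ S^n=\phi_n$ for every $n\in\Z$. Because $S$ preserves $\nu$, this gives the crucial rigidity: the quantity $\norm{\phi_n}_{\mathbb L^2}=\norm{\phi_0}_{\mathbb L^2}$ is independent of $n$.

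The second step is to compute this constant norm as a limit from both ends. Since $(\F_n)_n$ increases to $\F_\infty\ni A$, the martingale convergence theorem gives $\phi_n\to\E{\setone_A\,|\,\F_\infty}=\setone_A$ in $\mathbb L^2$ as $n\to+\infty$, so the common value of $\norm{\phi_n}_{\mathbb L^2}^2$ equals $\nu(A)$. Since $(\F_n)_n$ decreases to $\F_{-\infty}$, the reverse martingale convergence theorem gives $\phi_n\to\E{\setone_A\,|\,\F_{-\infty}}$ in $\mathbb L^2$ as $n\to-\infty$, whence $\norm{\E{\setone_A\,|\,\F_{-\infty}}}_{\mathbb L^2}^2=\nu(A)=\norm{\setone_A}_{\mathbb L^2}^2$. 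As $\E{\cdot\,|\,\F_{-\infty}}$ is the orthogonal projection onto $\mathbb L^2(\F_{-\infty})$, Pythagoras' identity forces $\setone_A-\E{\setone_A\,|\,\F_{-\infty}}=0$, i.e. $\setone_A$ is $\F_{-\infty}$-measurable and $A\in\F_{-\infty}$, as desired.

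There is no serious obstacle here; the argument is essentially the equality case of the contraction property of conditional expectation. The only points requiring care are the bookkeeping in the commutation identity $\phi_0\circ S^n=\phi_n$ (getting the invariance $\setone_A\circ S=\setone_A$ and the direction $\F_n=S^{-1}\F_{n-1}$ right), and invoking the correct convergence theorem at each end --- the ordinary martingale theorem as $n\to+\infty$ and the reverse one as $n\to-\infty$, both of which apply in $\mathbb L^2$ since the $\phi_n$ are uniformly bounded.
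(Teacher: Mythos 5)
Your proof is correct and follows essentially the same route as the paper: invariance of $A$ gives $\E{\setone_A\,|\,\F_n}=\E{\setone_A\,|\,\F_0}\circ S^n$, hence a constant $\mathbb L^2$-norm, which combined with martingale convergence at $+\infty$ and reverse martingale convergence at $-\infty$ and the Pythagoras identity for nested projections forces $\setone_A=\E{\setone_A\,|\,\F_{-\infty}}$. The only cosmetic difference is that you work directly with indicators $\setone_A$ while the paper argues for a general $f\in\mathbb L^2(\K)$ and deduces $\E{f\,|\,\F_{-\infty}}=\E{f\,|\,\F_{\infty}}$ before specializing.
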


\begin{proof}[Proof of Lemma \ref{class}]
 Let $f\in  \mathbb{L}^2(\nu)$. By purely Hilbert space arguments, we know that, in the space $ \mathbb{L}^2(X)$,
 $$
 \lim_{n\to-\infty}\E{f\,|\,\F_n}=\E{f\,|\,\F_{-\infty}}\quad\text{and}\quad  \lim_{n\to\infty}\E{f\,|\,\F_n}=\E{f\,|\,\F_{\infty}}\;.
 $$ 
Now suppose that $f$ is invariant under $S$, that is $f$ is $\K$-measurable. Then
$$
\E{f\,|\,\F_n}=\E{f\circ S^n\,|\, S^{-n}\F_0}=\E{f\,|\,\F_0 }\circ S^n.
$$
Thus $\|\E{f\,|\,\F_{-m}}\|_2=\|\E{f\,|\,\F_n}\|_2$ and with $n,m\to\infty$, we obtain $\|\E{f\,|\,\F_{-\infty}}\|_2=\|\E{f\,|\,\F_\infty}\|_2$.
Since $\F_{-\infty}\subset\F_\infty$, this implies 
 $
\E{f\,|\,\F_{-\infty}}=\E{f\,|\,\F_{\infty}}
$.
In particular, if $f$ is $\F_{\infty}$-measurable, then it is $\F_{-\infty}$-measurable, which is what we had to prove.
\end{proof}

We now come back to the situation described in the preceding section where the factor $\I$ is defined, and here is the result we were looking for.

\begin{theorem}
Let $f$ be a martingale difference adapted to the filtration $(\F_{i,j,k})$. Then its projection $\E{f\,|\,\I}$ is a martingale difference adapted to the filtration $(\F_{i,j,k})$, as well adapted to the filtration $(\I\cap\F_{i,j,k})$. Moreover $f-\E{f\,|\,\I}$ is also a martingale difference adapted to the filtration $(\F_{i,j,k})$.
\end{theorem}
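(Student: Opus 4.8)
The plan is to reduce all three assertions to a handful of commutation statements between the conditional expectation onto $\I$ and the conditional expectations onto the boundary $\sigma$-algebras $\F_{-1,\infty,\infty}$, $\F_{\infty,-1,\infty}$, $\F_{\infty,\infty,-1}$ (for the martingale property) and $\F_{0,\infty,\infty}$, $\F_{\infty,0,\infty}$, $\F_{\infty,\infty,0}$ (for adaptedness). Write $g=\E{f\,|\,\I}$. Note that the third assertion is immediate once the first is proved: $f$ and $g$ are both adapted martingale differences, so by linearity $f-g$ is $\F_{0,0,0}$-measurable and $\E{f-g\,|\,\F_{-1,\infty,\infty}}=0$ together with its two analogues, i.e. $f-g$ is a martingale difference adapted to $(\F_{i,j,k})$.

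The key preliminary step, which I expect to carry the real content, is the inclusion
$$\I_1\subseteq\F_{-\infty,\infty,\infty},\quad \I_2\subseteq\F_{\infty,-\infty,\infty},\quad \I_3\subseteq\F_{\infty,\infty,-\infty}.$$
I would obtain the first of these from Lemma \ref{class}, applied to $S=T_{1,0,0}$ and the increasing filtration $(\F_{n,\infty,\infty})_{n\in\Z}$ (note $\F_{n,\infty,\infty}=T_{-n,0,0}\F_{0,\infty,\infty}$), whose $S$-invariant $\sigma$-algebra is $\I_1$ and whose upper limit is $\bigvee_n\F_{n,\infty,\infty}=\F_{\infty,\infty,\infty}=\A$. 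Since $\I_1\subseteq\A$, the lemma forces $\I_1\subseteq\bigcap_n\F_{n,\infty,\infty}=\F_{-\infty,\infty,\infty}$; the other two inclusions are the cyclic analogues. As $\overline\I_2\vee\overline\I_3\subseteq\I_1$, this gives
$$\overline\I_2\vee\overline\I_3\subseteq\I_1\subseteq\F_{-\infty,\infty,\infty}\subseteq\F_{-1,\infty,\infty}\cap\F_{0,\infty,\infty},$$
and symmetrically for the other two coordinates.

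Next I would produce the commutations. Fix the $\Z^2$-action $U=(T_{0,j,k})$; its invariant $\sigma$-algebra is $\overline\I_1$, and both $\F_{-1,\infty,\infty}$ and $\F_{0,\infty,\infty}$ are $U$-invariant. Taking $\J=\overline\I_1$ and $\C=\overline\I_2\vee\overline\I_3$, which by the previous step is a sub-$\sigma$-algebra of each of these two $U$-invariant $\sigma$-algebras, we have $\J\vee\C=\I$, so Corollary \ref{co} (its hypotheses being supplied by Proposition \ref{pro}, in the $\Z^2$ form) shows that $\E{\cdot\,|\,\I}$ commutes with $\E{\cdot\,|\,\F_{-1,\infty,\infty}}$ and with $\E{\cdot\,|\,\F_{0,\infty,\infty}}$. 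Cyclically permuting coordinates yields the commutation of $\E{\cdot\,|\,\I}$ with all six boundary $\sigma$-algebras. The martingale property of $g$ then follows from $\E{g\,|\,\F_{-1,\infty,\infty}}=\E{\E{f\,|\,\F_{-1,\infty,\infty}}\,|\,\I}=0$ and its analogues; and adaptedness follows since commutation gives $\E{g\,|\,\F_{0,\infty,\infty}}=\E{\E{f\,|\,\F_{0,\infty,\infty}}\,|\,\I}=\E{f\,|\,\I}=g$ (as $f$ is $\F_{0,\infty,\infty}$-measurable), so $g$ is $\F_{0,\infty,\infty}$-measurable, and likewise $\F_{\infty,0,\infty}$- and $\F_{\infty,\infty,0}$-measurable. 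Because the conditional expectations onto these three $\sigma$-algebras pairwise commute (a limiting form of property \ref{propriete_commutativite}) and they intersect to $\F_{0,0,0}$, a function measurable with respect to all three is $\F_{0,0,0}$-measurable, which proves the first assertion.

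Finally, for the second assertion I would observe that the computation just made in fact shows that for every $h\in\mathbb{L}^2(\F_{0,0,0})$ the function $\E{h\,|\,\I}$ is $\F_{0,0,0}$-measurable. By the argument in the proof of Corollary \ref{co}, applied with $\D=\F_{0,0,0}$ and $\D'=\I$, this gives $\E{\E{f\,|\,\F_{0,0,0}}\,|\,\I}=\E{f\,|\,\I\cap\F_{0,0,0}}$; since $f$ is $\F_{0,0,0}$-measurable, $g=\E{f\,|\,\I\cap\F_{0,0,0}}$ is $\I\cap\F_{0,0,0}$-measurable, i.e. adapted to $(\I\cap\F_{i,j,k})$. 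For the martingale property relative to this sub-filtration, the commutation of $\I$ and $\F_{-1,\infty,\infty}$ yields $\E{g\,|\,\I\cap\F_{-1,\infty,\infty}}=\E{\E{g\,|\,\F_{-1,\infty,\infty}}\,|\,\I}=0$; and since the boundary $\sigma$-algebra $\bigvee_{j,k}(\I\cap\F_{-1,j,k})$ of the filtration $(\I\cap\F_{i,j,k})$ is contained in $\I\cap\F_{-1,\infty,\infty}$, the corresponding conditional expectation of $g$ vanishes as well; similarly in the other two directions. The main obstacle is precisely the inclusion $\I_1\subseteq\F_{-\infty,\infty,\infty}$ and its analogues: everything else is bookkeeping with commuting conditional expectations, but it is this ``invariant sets lie in the remote past'' fact, furnished by Lemma \ref{class}, that makes $\overline\I_2\vee\overline\I_3$ fit inside the boundary $\sigma$-algebras and thereby lets Proposition \ref{pro} apply.
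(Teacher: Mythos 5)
Your proposal is correct and follows essentially the same route as the paper: Lemma \ref{class} places $\I_1$ (hence $\C=\overline\I_2\vee\overline\I_3$) inside the remote past $\F_{-\infty,\infty,\infty}$, Proposition \ref{pro} and Corollary \ref{co} applied with $\J=\overline\I_1$ then give the commutation of $\E{\cdot\,|\,\I}$ with the one-sided $\sigma$-algebras, and the martingale and adaptedness properties follow by swapping conditional expectations. The only point the paper makes explicit that you leave implicit is the verification that $(\I\cap\F_{i,j,k})$ is itself a completely commuting invariant filtration, which is however immediate from the same commutation identity.
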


\begin{proof}
We want to apply Proposition \ref{pro} to the $\Z^2$-action $U_{j,k}=T_{0,j,k}$ on the space $(\Omega,\A,\mu)$. So we have $\J=\overline \I_1$. We consider also $\C=\overline I_2\vee\overline \I_3$ and $\F=\F_{i,\infty,\infty}$, for a given $i$. We know that $\F$ is $U$-invariant and, thanks to Lemma \ref{class}, we have $\I_1\subset\F$, hence $\C\subset\F$ since $\C\subset\I_1$.

Note that $\J\vee\C=\overline \I_1\vee\overline \I_2\vee\overline \I_3=\I$. 

Now Corollary \ref{co} tells us that the conditional expectation with respect to $\I$ commutes with the conditional expectations with respect to $\F_{i,\infty,\infty}$. Of course, we can exchange the roles of $i$, $j$ and $k$ and we find as well that the conditional expectation with respect to $\I$ commutes with the conditional expectations with respect to $\F_{\infty,j,\infty}$ and with the conditional expectations with respect to $\F_{\infty,\infty,k}$.

By the complete commuting property of the filtration, we have
$$
\E{\cdot\,|\,\F_{i,j,k}}=\E{\E{\E{\cdot\,|\,\F_{i,\infty,\infty}}\,|\,\F_{\infty,j,\infty}}\,|\,\F_{\infty,\infty,k}},
$$
and we conclude that the conditional expectation with respect to $\I$ commutes with the conditional expectations with respect to $\F_{i,j,k}$ : for all $f\in  \mathbb{L}^2(\A)$, for all $i,j,k\in\Z\cup\{\infty\}$,
\begin{equation}\label{eq:commut}
\E{\E{f\,|\,\F_{i,j,k}}\,|\,\I}=\E{\E{f\,|\,\I}\,|\,\F_{i,j,k}}=\E{f\,|\,\F_{i,j,k}\cap\I}.
\end{equation}

The first thing we want to see now is that $(\I\cap\F_{i,j,k})$ is a completely commuting invariant filtration. The first point is
$$T_{-i,-j,-k}(\I\cap\F_{0,0,0})=\I\cap\F_{i,j,k}$$
which is true thanks to (i) and the fact that $\I$ is invariant.
The second point is that, for all integrable function $f$, $$\E{\E{f\,|\,\I\cap\F_{i,j,k}}\,|\,\I\cap\F_{i',j',k'}}=\E{f\,|\,\I\cap\F_{\min(i,i'),\min(j,j'),\min(k,k')}}\;,$$
which, thanks to \eqref{eq:commut}, can be written
$$\E{\E{\E{f\,|\,\F_{i,j,k}}\,|\,\F_{i',j',k'}}\,|\,\I}=\E{\E{f\,|\,\F_{\min(i,i'),\min(j,j'),\min(k,k')}}\,|\,\I}$$
and is true thanks to (ii).

The second thing to verify is that if $f$ satisfies the martingale condition \eqref{eq:cond-mart} then $\E{f\,|\,\I}$ satisfies it. The facts that conditional expectations with respect to $\I$ and $\F_{0,0,0}$ commute and that $f$ is $\F_{0,0,0}$-measurable imply that $\E{f\,|\,\I}$ is  $\I\cap\F_{0,0,0}$-measurable. Moreover we have, thanks to \eqref{eq:commut}
and \eqref{eq:cond-mart}
$$
\E{\E{f\,|\,\I}\,|\,\F_{-1,\infty,\infty}}=\E{\E{f\,|\,\F_{-1,\infty,\infty}}\,|\,\I}=0
$$
and similarly
$$
\E{\E{f\,|\,\I}\,|\,\I\cap\F_{-1,\infty,\infty}}=0.
$$

We conclude that $\E{f\,|\,\I}$ is a martingale difference (for any of the two filtrations), and $f-\E{f\,|\,\I}$ is also a martingale difference adapted to the filtration $(\F_{i,j,k})$, just as difference of two martingale differences.
\end{proof}

\section{Limit law in the case of a product type action}\label{product-type CLT}
In this section, we deal with the case where $\pr{\Omega,\Aca,\mu}$
has the
form $\Omega=\Omega_1\times\Omega_2\times\Omega_3$, 
$\Aca=\Aca_1\otimes\Aca_2\otimes\Aca_3$ and 
$\mu=\mu_1\otimes\mu_2\otimes\mu_3$, where $\Aca_1$ (respectively 
$\Aca_2$, $\Aca_3$) is a $\sigma$-algebra on $\Omega_1$ (respectively 
$\Omega_2$, $\Omega_3$) and $\mu_1$, $\mu_2$, $\mu_3$ are probability
measures.
We consider an action $T$ of $\Z^3$ on $\Omega$ given by 
\begin{equation*}
 T_{i,j,k}\pr{\omega_1,\omega_2,\omega_3}=
 \pr{T_1^{i}\omega_1,T_2^{j}\omega_2,T_3^{k}\omega_3},\quad
i,j,k\in\Z.
\end{equation*}
We assume that the action of $T_1$ on $\Omega_1$ is ergodic, as well
as that of
$T_2$ on $\Omega_2$ and $T_3$ on $\Omega_3$.
For $\ell\in\{1,2,3\}$, consider a sub-$\sigma$-algebra
$\Fca_0^{\pr{\ell}}$ of
$\Aca_\ell$ such that 
$T_\ell\Fca_0^{\pr{\ell}}\subset \Fca_0^{\pr{\ell}}$. Define 
the sub-$\sigma$-algebra $\Fca_{i,j,k}$ of $\Aca$ by
\begin{equation*}
 \Fca_{i,j,k}=\pr{T_1^{-i}\Fca_0^{\pr{1}}}\otimes 
\pr{T_2^{-j}\Fca_0^{\pr{2}}}\otimes \pr{T_3^{-k}\Fca_0^{\pr{3}}}.
\end{equation*}

\begin{prop}
 The filtration $\pr{\Fca_{i,j,k}}_{i,j,k\in\Z}$ is a completely
commuting
invariant filtration.
\end{prop}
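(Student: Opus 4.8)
The plan is to exploit the fact that every object in sight is a tensor product over the three coordinates, so that the three-dimensional complete commutation property reduces to the elementary tower property for a single nested filtration, applied separately in each coordinate.

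First, the invariance condition (i) is essentially built into the definition. Since $T_{-i,-j,-k}$ is the product transformation $T_1^{-i}\times T_2^{-j}\times T_3^{-k}$ and $\Fca_{0,0,0}=\Fca_0^{(1)}\otimes\Fca_0^{(2)}\otimes\Fca_0^{(3)}$, the operation it induces on the tensor-product $\sigma$-algebra factorizes coordinate-wise, giving exactly $T_{-i,-j,-k}\Fca_{0,0,0}=(T_1^{-i}\Fca_0^{(1)})\otimes(T_2^{-j}\Fca_0^{(2)})\otimes(T_3^{-k}\Fca_0^{(3)})=\Fca_{i,j,k}$. Along the way I would record the small observation that the assumption $T_\ell\Fca_0^{(\ell)}\subset\Fca_0^{(\ell)}$ makes each one-dimensional family $\mathcal G_i^{(\ell)}:=T_\ell^{-i}\Fca_0^{(\ell)}$ increasing in $i$, hence a genuine filtration of $\Aca_\ell$; this monotonicity is what will let the one-dimensional tower property be used below, and it is consistent with the remark that $\Fca_{i,j,k}\subset\Fca_{i',j',k'}$ whenever $i\le i'$, $j\le j'$, $k\le k'$.

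The substance is the complete commutation condition (ii). Here I would rely on two ingredients. The first is the factorization of conditional expectation on a product space: if $\mu=\mu_1\otimes\mu_2\otimes\mu_3$ and $f(\omega)=f_1(\omega_1)f_2(\omega_2)f_3(\omega_3)$ with each $f_\ell$ bounded, then
\[\E{f\,|\,\Fca_{i,j,k}}=\E{f_1\,|\,\mathcal G_i^{(1)}}\cdot\E{f_2\,|\,\mathcal G_j^{(2)}}\cdot\E{f_3\,|\,\mathcal G_k^{(3)}},\]
where each factor is the conditional expectation computed inside $(\Omega_\ell,\Aca_\ell,\mu_\ell)$, viewed as a function on $\Omega$ depending only on $\omega_\ell$. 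This is the step I expect to be the main (though routine) obstacle: it is a Fubini-type statement that I would prove by checking that the right-hand side is $\Fca_{i,j,k}$-measurable and integrates correctly against the generating rectangles $A_1\times A_2\times A_3$ with $A_\ell\in\mathcal G^{(\ell)}$. The second ingredient is purely one-dimensional: for a nested filtration one has $\E{\E{g\,|\,\mathcal G_i^{(\ell)}}\,|\,\mathcal G_{i'}^{(\ell)}}=\E{g\,|\,\mathcal G_{\min(i,i')}^{(\ell)}}$, which follows at once by splitting into the cases $i\le i'$ and $i\ge i'$ and using $\mathcal G_{\min}\subset\mathcal G_{\max}$.

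Combining these, I would first reduce to product functions $f=f_1f_2f_3$ by density: finite linear combinations of such products are dense in $\mathbb{L}^1(\mu)$, and both sides of (ii) are $\mathbb{L}^1$-contractions (compositions of conditional expectations), so equality on this dense set extends to all integrable $f$. For such an $f$, applying the factorization and the one-dimensional tower property in each coordinate turns the left-hand side of (ii) into
\[\E{f_1\,|\,\mathcal G_{\min(i,i')}^{(1)}}\cdot\E{f_2\,|\,\mathcal G_{\min(j,j')}^{(2)}}\cdot\E{f_3\,|\,\mathcal G_{\min(k,k')}^{(3)}},\]
which is precisely $\E{f\,|\,\Fca_{\min(i,i'),\min(j,j'),\min(k,k')}}$ by the factorization once more. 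This establishes (ii) on a dense set and hence for all integrable $f$, completing the verification that $(\Fca_{i,j,k})$ is a completely commuting invariant filtration.
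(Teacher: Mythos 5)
Your proof is correct and follows essentially the same route as the paper: both arguments rest on the coordinate-wise factorization $\E{f_1\otimes f_2\otimes f_3\mid\Bca_1\otimes\Bca_2\otimes\Bca_3}=\E{f_1\mid\Bca_1}\otimes\E{f_2\mid\Bca_2}\otimes\E{f_3\mid\Bca_3}$ (verified against rectangles), a density argument for products, and the one-dimensional nested-filtration identity. The only cosmetic difference is that the paper first replaces $f$ by $\E{f\mid\Fca_{i,j,k}}$ and works with indicators in the smaller algebras, whereas you apply the tower property coordinate-wise to general product functions; both are equivalent.
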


\begin{proof}
 Invariance follows by construction. Let us show commutativity
property, \ref{propriete_commutativite}. Replacing $f$ by
$g=\E{f\mid\Fca_{i,j,k}}$,
it suffices to show that for each integrable and
$\Fca_{i,j,k}$-measurable
function $g$, 
\begin{equation}\label{eq:commut_avec_g}
 \E{g\mid\Fca_{i',j',k'}}=\E{g\mid\Fca_{\min(i,i'),\min(j,
j'),\min(k,k')}}.
\end{equation}

Since the set of linear combinations of products of indicator
functions of the
form 
$\pr{\omega_1,\omega_2,\omega_3}\mapsto \mathds{1}_{A_1}\pr{\omega_1}
\mathds{1}_{A_2}\pr{\omega_2}
\mathds{1}_{A_3}\pr{\omega_3}$ with $A_1\in T_1^{-i}\Fca_0^{\pr{1}}$,
$A_2\in
T_2^{-j}\Fca_0^{\pr{2}}$ and $A_3\in T_3^{-k}\Fca_0^{\pr{3}}$ is dense in 
$\mathbb L^1\pr{\Fca_{i,j,k}}$, it suffices to prove
\eqref{eq:commut_avec_g}
when $g$ is of this form.
We use the following notation for
 $f_\ell\in\mathbb L^2\pr{\Omega_\ell,\mathcal
	 A_\ell,\mu_\ell}$, $\ell\in\ens{1,2,3}$,
\begin{equation*}
f_1\otimes f_2 \otimes f_3\pr{\omega_1,\omega_2,\omega_3}=
f_1\pr{\omega_1}f_2\pr{\omega_2}f_3\pr{\omega_3}.
\end{equation*}
One can check that if $\Bca_\ell$, $\ell\in\ens{1,2,3}$, are
sub-$\sigma$-algebras of $\Aca_\ell$, then
\begin{equation}\label{eq:cond_expectation_product}
\E{ f_1\otimes f_2\otimes f_3\mid  \Bca_1\otimes \Bca_2\otimes \Bca_3}
= \Ea{f_1\mid \Bca_1}\otimes \Eb{f_2\mid \Bca_2}\otimes \Ec{f_3\mid
\Bca_3}.
\end{equation}
Indeed, it suffices to check \eqref{eq:cond_expectation_product} when
$f_\ell$ is the indicator of a set $A_\ell$ of $\Aca_\ell$. For
$B_\ell\in\Bca_\ell$, let us write $B=B_1\times B_2\times B_3$. Using
independence, we have
\begin{align*}
\E{\pr{ \mathds{1}_{A_1}\otimes \mathds{1}_{A_2}\otimes
\mathds{1}_{A_3}  }\mathds{1}_{B} }&=
 \Ea{\mathds{1}_{A_1}\mathds{1}_{B_1} }\cdot
 \Eb{\mathds{1}_{A_2}\mathds{1}_{B_2} }\cdot
 \Ec{\mathds{1}_{A_3}\mathds{1}_{B_3} }\\
&= \Ea{\Ea{\mathds{1}_{A_1}\mid\Bca_1}\mathds{1}_{B_1} }\cdot
\Eb{\Eb{\mathds{1}_{A_2}\mid\Bca_2}\mathds{1}_{B_2} }\cdot
\Ec{\Ec{\mathds{1}_{A_3}\mid\Bca_3}\mathds{1}_{B_3} }\\
&=\E{ \Ea{\mathds{1}_{A_1}\mid\Bca_1}\cdot\mathds{1}_{B_1}\cdot
\Eb{\mathds{1}_{A_2}\mid\Bca_2}\cdot\mathds{1}_{B_2}\cdot
\Ec{\mathds{1}_{A_3}\mid\Bca_3}\cdot\mathds{1}_{B_3}  }\\
&=\E{
 \Ea{\mathds{1}_{A_1}\mid\Bca_1}\cdot
 \Eb{\mathds{1}_{A_2}\mid\Bca_2}\cdot
 \Ec{\mathds{1}_{A_3}\mid\Bca_3}\cdot\mathds{1}_{B }}
\end{align*}
which shows \eqref{eq:cond_expectation_product}.
When $g=\mathds{1}_{A_1}\otimes \mathds{1}_{A_2}\otimes
\mathds{1}_{A_3}$ with $A_1\in T_1^{-i}\Fca_0^{\pr{1}}$,
$A_2\in
T_2^{-j}\Fca_0^{\pr{2}}$ and $A_3\in T_3^{-k}\Fca_0^{\pr{3}}$,
$\Bca_1=T_1^{-i'}\Fca_0^{\pr{1}}$, $\Bca_2=T_2^{-j'}\Fca_0^{\pr{2}}$
and $\Bca_3=T_3^{-k'}\Fca_0^{\pr{3}}$,
\eqref{eq:cond_expectation_product} gives exactly
\eqref{eq:commut_avec_g}.
\end{proof}

In order to investigate the convergence of the partial sum process given by \eqref{eq:sommes_partielles_normalisees}, 
we will need to decompose the considered function $f$ as a sum of functions which can be expressed as a product of functions of a
single $\omega_\ell$.
\begin{lem}\label{lem:base_Hilbert}
Denote by $\Delta$ the set of square integrable functions satisfying the martingale property defined as in \eqref{eq:cond-mart}, that is 
\begin{equation*}
\Delta=\ens{f\in\mathbb L^2, f\mbox{ is }\Fca_{0,0,0}\mbox{-measurable and }\E{f\mid\Fca_{-1,\infty,\infty}}=\E{f\mid\Fca_{\infty,-1,\infty}}=\E{f\mid\Fca_{\infty,\infty,-1}}=0}.
\end{equation*}
 There exist random variables $v_{a,1}$, $v_{b,2}$, $v_{c,3}$,
$a,b,c\geq 1$,  such that the collection of random variables
 $\pr{\omega_1,\omega_2,\omega_3}\mapsto 
v_{a,1}\pr{\omega_1}v_{b,2}\pr{\omega_2}v_{c,3}\pr{\omega_3}$ is a
Hilbert basis of the space 
 $\Delta$.
\end{lem}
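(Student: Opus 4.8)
The plan is to recognize the space $\Delta$ as a Hilbert tensor product of three coordinatewise martingale-difference spaces, and then to take tensor products of orthonormal bases of the three factors.

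For each $\ell\in\ens{1,2,3}$ and $n\in\Z$, set $\Gca_n^{(\ell)}=T_\ell^{-n}\Fca_0^{(\ell)}$. Since $T_\ell\Fca_0^{(\ell)}\subset\Fca_0^{(\ell)}$, the family $(\Gca_n^{(\ell)})_n$ is an increasing filtration of $\Aca_\ell$, and $\Fca_{i,j,k}=\Gca_i^{(1)}\otimes\Gca_j^{(2)}\otimes\Gca_k^{(3)}$, in particular $\Fca_{0,0,0}=\Gca_0^{(1)}\otimes\Gca_0^{(2)}\otimes\Gca_0^{(3)}$. First I would identify the limit $\sigma$-algebras occurring in \eqref{eq:cond-mart}. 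Writing $\Gca_\infty^{(\ell)}=\bigvee_n\Gca_n^{(\ell)}$, and using that the join of an increasing family of product $\sigma$-algebras is the product of the joins, one gets
$$
\Fca_{-1,\infty,\infty}=\Gca_{-1}^{(1)}\otimes\Gca_\infty^{(2)}\otimes\Gca_\infty^{(3)},
$$
and the two symmetric identities for $\Fca_{\infty,-1,\infty}$ and $\Fca_{\infty,\infty,-1}$. I expect this identification, namely the commutation of the countable joins with the tensor product, to be the main technical point that needs care.

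Next I would translate the three martingale conditions into projection conditions. Denote by $P_n^{(\ell)}$, $P_\infty^{(\ell)}$ the conditional expectations in the $\ell$-th coordinate onto $\Gca_n^{(\ell)}$, $\Gca_\infty^{(\ell)}$, and set $Q^{(\ell)}=P_0^{(\ell)}-P_{-1}^{(\ell)}$, the orthogonal projection of $\mathbb{L}^2(\Omega_\ell)$ onto $\mathcal{H}_\ell:=\mathbb{L}^2(\Gca_0^{(\ell)})\ominus\mathbb{L}^2(\Gca_{-1}^{(\ell)})$, the coordinatewise space of martingale differences. By \eqref{eq:cond_expectation_product}, the conditional expectation onto a product $\sigma$-algebra factorizes as the tensor product of the coordinate conditional expectations. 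Hence for $f\in\mathbb{L}^2(\Fca_{0,0,0})$, using $P_\infty^{(2)}P_0^{(2)}=P_0^{(2)}$, $P_\infty^{(3)}P_0^{(3)}=P_0^{(3)}$ and $P_{-1}^{(1)}P_0^{(1)}=P_{-1}^{(1)}$, the condition $\E{f\mid\Fca_{-1,\infty,\infty}}=0$ becomes $(P_{-1}^{(1)}\otimes P_0^{(2)}\otimes P_0^{(3)})f=0$, that is $f\in V_1:=\mathcal{H}_1\otimes\mathbb{L}^2(\Gca_0^{(2)})\otimes\mathbb{L}^2(\Gca_0^{(3)})$. The two remaining conditions read symmetrically as $f\in V_2$ and $f\in V_3$, with the martingale-difference factor now in the second, resp.\ third, coordinate.

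Then I would conclude that $\Delta=V_1\cap V_2\cap V_3$. Each $V_\ell$ is the range of an orthogonal projection of $\mathbb{L}^2(\Fca_{0,0,0})$ which is a tensor product of $Q^{(\ell)}$ with identities; since in each coordinate $Q^{(\ell)}$ and $P_0^{(\ell)}$ commute (all being projections onto nested subspaces), the three projections commute and the intersection of their ranges is the range of their product $Q^{(1)}\otimes Q^{(2)}\otimes Q^{(3)}$, namely $\mathcal{H}_1\otimes\mathcal{H}_2\otimes\mathcal{H}_3$. Finally, choosing orthonormal bases $(v_{a,1})_{a\ge1}$, $(v_{b,2})_{b\ge1}$ and $(v_{c,3})_{c\ge1}$ of the separable Hilbert spaces $\mathcal{H}_1$, $\mathcal{H}_2$, $\mathcal{H}_3$, the standard fact that tensor products of orthonormal bases form an orthonormal basis of the Hilbert tensor product shows that the collection $(\omega_1,\omega_2,\omega_3)\mapsto v_{a,1}(\omega_1)v_{b,2}(\omega_2)v_{c,3}(\omega_3)$ is a Hilbert basis of $\Delta$, which is the desired conclusion (with the convention that an index range may be finite, or empty, when the corresponding factor is finite-dimensional or trivial).
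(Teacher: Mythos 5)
Your proof is correct, and it reaches the conclusion by a route that differs from the paper's in the one step that carries the real content, namely totality of the product family. The paper works concretely: it picks the same orthonormal bases $\pr{v_{a,1}}$, $\pr{v_{b,2}}$, $\pr{v_{c,3}}$ of the three coordinate spaces $\mathbb L^2\pr{\Fca_0^{\pr{\ell}}}\ominus\mathbb L^2\pr{\Fca_{-1}^{\pr{\ell}}}$, gets orthonormality from the product structure, and proves totality by taking $g\in\Delta$ orthogonal to all products $u_{a,b,c}$, writing $\E{g\,u_{a,b,c}}$ as an iterated integral via Fubini, and peeling off one coordinate at a time: the partial integral in $\omega_3$ lies in the third coordinate martingale-difference space (this is where the conditions \eqref{eq:cond-mart} enter), is orthogonal to every $v_{c,3}$, hence vanishes a.e., and one iterates. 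Your argument replaces this almost-everywhere bookkeeping by an operator identity: using \eqref{eq:cond_expectation_product} you show that $\Delta$ is exactly the range of the tensor-product projection $Q^{\pr{1}}\otimes Q^{\pr{2}}\otimes Q^{\pr{3}}$, i.e.\ the Hilbert tensor product $\mathcal H_1\otimes\mathcal H_2\otimes\mathcal H_3$, and then both orthonormality and totality follow at once from the standard fact about bases of tensor products. What your version buys is that it makes explicit the structural statement $\Delta=\mathcal H_1\otimes\mathcal H_2\otimes\mathcal H_3$ (which the paper only uses implicitly) and avoids the measure-theoretic exceptional-set argument; the price is the preliminary identification $\Fca_{-1,\infty,\infty}=\Gca_{-1}^{\pr{1}}\otimes\Gca_\infty^{\pr{2}}\otimes\Gca_\infty^{\pr{3}}$, which you correctly flag as the point needing care (it can also be obtained directly at the level of projections by martingale convergence, $P_{\Fca_{-1,j,k}}\to P_{\Fca_{-1,\infty,\infty}}$ strongly). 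Both proofs ultimately rest on the factorization \eqref{eq:cond_expectation_product} and produce the same basis.
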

 \begin{proof}[Proof of Lemma~\ref{lem:base_Hilbert}]
The space $\mathbb L^2\pr{\Fca_{0}^{\pr{1}}}\ominus  
\mathbb L^2\pr{\Fca_{-1}^{\pr{1}}}$ is separable and admits 
a Hilbert basis $\pr{v_{a,1}}_{a\geq 1}$. Similarly,
we denote by $\pr{v_{b,2}}_{b\geq 1}$ a Hilbert basis of
$\mathbb L^2\pr{\Fca_{0}^{\pr{2}}}\ominus  
\mathbb L^2\pr{\Fca_{-1}^{\pr{2}}}$ and by $\pr{v_{c,3}}_{c\geq 1}$ a
Hilbert basis of
$\mathbb L^2\pr{\Fca_{0}^{\pr{3}}}\ominus  
\mathbb L^2\pr{\Fca_{-1}^{\pr{3}}}$.

We check that the collection of maps
$u_{a,b,c}\pr{\omega_1,\omega_2,\omega_3}\mapsto
v_{a,1}\pr{\omega_1}v_{b,2}\pr{\omega_2}v_{c,3}\pr{\omega_3}$ is a
Hilbert basis of the space 
 $\Delta$. 
 Orthonormality follows from the product structure of $\Omega$. We
 have to show that if $g\in \Delta$ is such that $\E{g \cdot u_{a,b,c}}=0$
for each
 $a,b,c\geq 1$, then $g=0$. To this aim, we write the previous
expectation as an integral over
$\Omega_1\times\Omega_2\times\Omega_3$
 and use Fubini's theorem to get that 
 \begin{equation*}
 \int_{\Omega_3}v_{c,3}\pr{\omega_3}\pr{
\int_{\Omega_1\times\Omega_{2}}
   v_{a,1}\pr{\omega_1}v_{b,2}\pr{\omega_1}
  g\pr{\omega_1,\omega_2,\omega_3}
  \mathrm{d}\mu_1\pr{\omega_1}\mathrm{d}\mu_{2}\pr{\omega_{2}}}\mathrm{d}\mu_3\pr{\omega_3}=0.
 \end{equation*}
Since $\pr{v_{ c,3}}_{c\geq 1}$ is a Hilbert basis of $\mathbb
L^2\pr{\Fca_{0}^{\pr{3}}}\ominus
\mathbb L^2\pr{\Fca_{-1}^{\pr{3}}}$, it follows that there exists $\Omega'_3\subset \Omega_3$ such that $\mu_3\pr{\Omega'_3}=1$ and for each 
$\omega_3\in\Omega'_3$, the equality 
\begin{equation*}
\int_{\Omega_1\times\Omega_{2}}
   v_{a,1}\pr{\omega_1}v_{b,2}\pr{\omega_1}
  g\pr{\omega_1,\omega_2,\omega_3}
  \mathrm{d}\mu_1\pr{\omega_1}\mathrm{d}\mu_{2}\pr{\omega_{2}}=0
\end{equation*}
takes place; doing the same reasoning gives sets $\Omega'_1\subset\Omega_1$, $\Omega'_2\subset\Omega_2$ such that 
$\mu_1\pr{\Omega'_1}=\mu_2\pr{\Omega'_2}=1$ and for $\omega_1\in\Omega'_1$, $\omega_2\in\Omega'_2$,  $g\pr{\omega_1,\omega_2,\omega_3}=0$ hence 
$g=0$ $\mu$-a.s.. This ends the proof of Lemma~\ref{lem:base_Hilbert}.
 \end{proof}

\begin{theorem}\label{thm:TLC_product}
Let $\pr{\Omega,\Aca,\mu}$ be a dynamical system of the form $\Omega=\Omega_1\times\Omega_2\times\Omega_3$, 
$\Aca=\Aca_1\otimes\Aca_2\otimes\Aca_3$ and 
$\mu=\mu_1\otimes\mu_2\otimes\mu_3$, where $\Aca_1$ (respectively 
$\Aca_2$, $\Aca_3$) is a $\sigma$-algebra on $\Omega_1$ (respectively 
$\Omega_2$, $\Omega_3$) and $\mu_1$, $\mu_2$, $\mu_3$ are probability
measures. For $\ell\in\{1,2,3\}$, consider a sub-$\sigma$-algebra
$\Fca_0^{\pr{\ell}}$ of
$\Aca_\ell$ such that 
$T_\ell\Fca_0^{\pr{\ell}}\subset \Fca_0^{\pr{\ell}}$. Define 
the sub-$\sigma$-algebra $\Fca_{i,j,k}$ of $\Aca$ by
\begin{equation*}
 \Fca_{i,j,k}=\pr{T_1^{-i}\Fca_0^{\pr{1}}}\otimes 
\pr{T_2^{-j}\Fca_0^{\pr{2}}}\otimes \pr{T_3^{-k}\Fca_0^{\pr{3}}}.
\end{equation*}
Let $f$ be function such that $\pr{f\circ T_{i,j,k}}_{i,j,k\in\Z}$ is 
a martingale difference random field. 

There exist a family of real numbers
$\pr{\lambda_{a,b,c}\pr{f}}_{a,b,c\geq 1}$
such that $\sum_{a,b,c\geq 1}\lambda_{a,b,c}^2\pr{f}<\infty$ and such that if $\pr{N_a^{\pr{1}}}_{a\geq 1}$, $\pr{N_b^{\pr{2}}}_{b\geq
1}$ and $\pr{N_c^{\pr{3}}}_{c\geq 1}$ are three
i.i.d.\
and mutually independent sequences of standard normal random
variables, then
\begin{equation*}
 \frac1{\sqrt{\ell mn}}\sum_{i=1}^\ell\sum_{j=1}^m\sum_{k=1}^n
 f\circ T_{i,j,k}\to \sum_{a,b,c=1}^\infty \lambda_{a,b,c}\pr{f}
 N_a^{\pr{1}}N_b^{\pr{2}}N_c^{\pr{3}}
\end{equation*}
in distribution as $\min\ens{\ell,m,n}\to\infty$.
\end{theorem}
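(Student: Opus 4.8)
The plan is to expand $f$ along the Hilbert basis of $\Delta$ produced in Lemma~\ref{lem:base_Hilbert} and thereby reduce the three-dimensional statement to three independent one-dimensional martingale central limit theorems. Writing $u_{a,b,c}=v_{a,1}\otimes v_{b,2}\otimes v_{c,3}$ and $\lambda_{a,b,c}(f)=\langle f,u_{a,b,c}\rangle$, we have $f=\sum_{a,b,c}\lambda_{a,b,c}(f)\,u_{a,b,c}$ in $\mathbb L^2$ with $\sum_{a,b,c}\lambda_{a,b,c}^2(f)=\norm{f}_2^2<\infty$. Denoting by $S_{\ell,m,n}(g)$ the normalized sum \eqref{eq:sommes_partielles_normalisees} attached to a function $g$, the key point is that on a single basis element the sum factorizes:
\begin{equation*}
S_{\ell,m,n}(u_{a,b,c})=\pr{\frac1{\sqrt\ell}\sum_{i=1}^\ell v_{a,1}\circ T_1^i}\pr{\frac1{\sqrt m}\sum_{j=1}^m v_{b,2}\circ T_2^j}\pr{\frac1{\sqrt n}\sum_{k=1}^n v_{c,3}\circ T_3^k}=:X_a^{(1)}(\ell)\,X_b^{(2)}(m)\,X_c^{(3)}(n).
\end{equation*}
Because $v_{a,1}\in\mathbb L^2(\Fca_0^{(1)})\ominus\mathbb L^2(\Fca_{-1}^{(1)})$, the sequence $(v_{a,1}\circ T_1^i)_{i}$ is a stationary, ergodic sequence of martingale differences for the ergodic transformation $T_1$, so the classical one-dimensional martingale central limit theorem gives $X_a^{(1)}(\ell)\to N_a^{(1)}$ in distribution, and likewise in the two other coordinates.

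I would first establish the convergence for a function $f_F=\sum_{(a,b,c)\in F}\lambda_{a,b,c}(f)\,u_{a,b,c}$ with $F\subset\N^3$ finite. For any finite family of scalars, $\sum_a c_a v_{a,1}$ again belongs to $\mathbb L^2(\Fca_0^{(1)})\ominus\mathbb L^2(\Fca_{-1}^{(1)})$, hence is an ergodic martingale difference of variance $\sum_a c_a^2$; the one-dimensional theorem yields $\sum_a c_a X_a^{(1)}(\ell)\to\mathcal N\pr{0,\sum_a c_a^2}$, and the Cram\'er--Wold device upgrades this to joint convergence of the vector $\pr{X_a^{(1)}(\ell)}_a$ towards a standard Gaussian vector $\pr{N_a^{(1)}}_a$. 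The three vectors $\pr{X_a^{(1)}(\ell)}_a$, $\pr{X_b^{(2)}(m)}_b$, $\pr{X_c^{(3)}(n)}_c$ are functions of the three independent coordinates $\omega_1,\omega_2,\omega_3$ respectively, hence mutually independent, so their joint law converges to that of three independent standard Gaussian vectors. The continuous mapping theorem applied to the polynomial $\pr{\mathbf x,\mathbf y,\mathbf z}\mapsto\sum_{(a,b,c)\in F}\lambda_{a,b,c}(f)\,x_a y_b z_c$ then gives
\begin{equation*}
S_{\ell,m,n}(f_F)\longrightarrow\sum_{(a,b,c)\in F}\lambda_{a,b,c}(f)\,N_a^{(1)}N_b^{(2)}N_c^{(3)}
\end{equation*}
in distribution as $\min\ens{\ell,m,n}\to\infty$.

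The remaining task, and the only genuine difficulty, is to pass from finite $F$ to the full sum while controlling the truncation error uniformly in $(\ell,m,n)$. For this I would prove the isometry $\E{S_{\ell,m,n}(g)^2}=\norm{g}_2^2$ for every $g\in\Delta$ and all $\ell,m,n$. It follows from the orthogonality of the shifts of a martingale difference: for $q\ge1$ the function $v_{a,1}\circ T_1^{-q}$ is $\Fca_{-1}^{(1)}$-measurable, so $\langle v_{a,1}\circ T_1^{-q},v_{a',1}\rangle=0$; this makes the translates $u_{a,b,c}\circ T_{i,j,k}$ pairwise orthogonal, gives $\E{X_a^{(1)}(\ell)X_{a'}^{(1)}(\ell)}=\delta_{a,a'}$ in each coordinate, and, together with the independence of the coordinates, yields $\E{S_{\ell,m,n}(g)^2}=\sum_{a,b,c}\lambda_{a,b,c}^2(g)=\norm{g}_2^2$. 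Consequently $\E{\pr{S_{\ell,m,n}(f)-S_{\ell,m,n}(f_F)}^2}=\norm{f-f_F}_2^2$ tends to $0$ as $F\uparrow\N^3$, uniformly in $(\ell,m,n)$. Since the products $N_a^{(1)}N_b^{(2)}N_c^{(3)}$ are orthonormal in $\mathbb L^2$, the candidate limit $\sum_{a,b,c}\lambda_{a,b,c}(f)\,N_a^{(1)}N_b^{(2)}N_c^{(3)}$ is well defined and is the $\mathbb L^2$-limit of its truncations. The three facts --- convergence of $S_{\ell,m,n}(f_F)$ for each fixed $F$, $\mathbb L^2$-convergence of the limits as $F\uparrow\N^3$, and the uniform smallness of the error (via Markov's inequality, $\PP\pr{\abs{S_{\ell,m,n}(f)-S_{\ell,m,n}(f_F)}>\eps}\le\eps^{-2}\norm{f-f_F}_2^2$) --- are precisely the hypotheses of the standard approximation lemma for convergence in distribution, which delivers the theorem. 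The main obstacle is therefore not the limit theorem itself but securing this uniform $\mathbb L^2$-control, namely the isometry property, which is what legitimizes interchanging the limit in $(\ell,m,n)$ with the limit over the truncation $F$.
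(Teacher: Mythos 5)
Your proposal is correct and follows essentially the same route as the paper: expansion of $f$ in the product Hilbert basis of Lemma~\ref{lem:base_Hilbert}, a finite-rank reduction handled by the one-dimensional martingale CLT together with the Cram\'er--Wold device, independence of the three coordinates and the continuous mapping theorem, and finally the uniform $\mathbb L^2$ control coming from orthogonality of the increments to justify passing to the infinite sum via the standard approximation lemma (Theorem~4.2 of \cite{MR0233396}). The only cosmetic difference is that you apply Cram\'er--Wold coordinate-wise and then invoke independence across the three factors, whereas the paper treats the full $3K$-dimensional vector at once; the content is identical.
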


It will be clear from the proof of this theorem that any square
summable family $\pr{\lambda_{a,b,c}\pr{f}}_{a,b,c\geq 1}$ can appear
in the expression of the limit in distribution.

\begin{proof}
  We know by Lemma~\ref{lem:base_Hilbert} that we can express $f$ as
  $$f\pr{\omega_1,\omega_2,\omega_3}=\sum_{a,b,c=1}^\infty
\lambda_{a,b,c}\pr{f}
v_{a,1}\pr{\omega_1}v_{b,2}\pr{\omega_2}v_{c,3}\pr{\omega_3},$$
where the convergence takes place in $\mathbb L^2\pr{\mu}$.

Define
\begin{equation*}
 f_K\pr{\omega_1,\omega_2,\omega_3}:=\sum_{a,b,c=1}^K
\lambda_{a,b,c}\pr{f}
v_{a,1}\pr{\omega_1}v_{b,2}\pr{\omega_2}v_{c,3}\pr{\omega_3}.
\end{equation*}
Note that $\pr{f_K\circ T^{i,j,k}}_{i,j,k\in\Z}$ is also a martingale
difference random field. Suppose that we proved for each $K\geq 1$
Theorem~\ref{thm:TLC_product} with $f$ replaced by $f_K$.
By orthogonality of increments, for all $\ell, m, n>0$,
$$
\PP\pr{\abs{\frac1{\sqrt{\ell
mn}}\sum_{i=1}^\ell\sum_{j=1}^m\sum_{k=1}^n
 f\circ T_{i,j,k}-\frac1{\sqrt{\ell
mn}}\sum_{i=1}^\ell\sum_{j=1}^m\sum_{k=1}^n
f_K\circ T_{i,j,k}}>\varepsilon}
\leq\frac
1{\varepsilon^2}\norm{f-f_K}_2^2
$$
hence
$$
  \lim_{K\to\infty}\sup_{\ell,m,n}
  \PP\pr{\abs{\frac1{\sqrt{\ell
mn}}\sum_{i=1}^\ell\sum_{j=1}^m\sum_{k=1}^n
 f\circ T_{i,j,k}-\frac1{\sqrt{\ell
mn}}\sum_{i=1}^\ell\sum_{j=1}^m\sum_{k=1}^n
f_K\circ T_{i,j,k}}>\varepsilon}=0.
$$
Moreover,
\begin{multline*}
  \lim_{K\to\infty}\norm{\sum_{a,b,c=1}^\infty \lambda_{a,b,c}\pr{f}
 N_a^{\pr{1}}N_b^{\pr{2}}N_c^{\pr{3}}-\sum_{a,b,c=1}^K
\lambda_{a,b,c}\pr{f}
 N_a^{\pr{1}}N_b^{\pr{2}}N_c^{\pr{3}}}_2^2\\ =
\lim_{K\to\infty}\norm{\sum_{a,b,c=1}^\infty
\mathds{1}_{\max\ens{a,b,c}\geq K+1  }\;
  \lambda_{a,b,c}^2\pr{f}
}_2^2=0
\end{multline*}
hence we would get the conclusion of  Theorem~\ref{thm:TLC_product}
by an application of Theorem~4.2 in \cite{MR0233396}.
We thus have to prove that for each $K$,
\begin{equation}\label{eq:tlc_produit_somme_de_K_elements}
 \frac1{\sqrt{\ell mn}}\sum_{i=1}^\ell\sum_{j=1}^m\sum_{k=1}^n
 f_K\circ T_{i,j,k}\to \sum_{a,b,c=1}^K\lambda_{a,b,c}\pr{f}
 N_a^{\pr{1}}N_b^{\pr{2}}N_c^{\pr{3}}
\end{equation}
in distribution as $\min\ens{\ell,m,n}\to\infty$.
By definition of $f_K$,
\begin{multline*}
  \frac1{\sqrt{\ell mn}}\sum_{i=1}^\ell\sum_{j=1}^m\sum_{k=1}^n
  f_K\circ T_{i,j,k}\\
  =\sum_{a,b,c=1}^K\lambda_{a,b,c}\pr{f}
    \pr{\frac 1{\sqrt{\ell}}\sum_{i=1}^\ell v_{a,1}\circ T_1^i}
      \pr{\frac 1{\sqrt{m}}\sum_{j=1}^m v_{b,2}\circ T_2^j}
        \pr{\frac 1{\sqrt{n}}\sum_{k=1}^n v_{c,3}\circ
T_3^k}.
\end{multline*}
Consider the random vector $V_{\ell,m,n}$ of dimension $3K$, where
the first $K$ entries are $\frac 1{\sqrt{\ell}}\sum_{i=1}^\ell
v_{a,1}\circ T_1^i$, $1\leq a\leq K$, the entries
of index between $K+1$ and $2K$ are $\frac 1{\sqrt{m}}\sum_{j=1}^m
v_{b,2}\circ T_2^j$ and the last $K$ are $\frac
1{\sqrt{n}}\sum_{k=1}^n v_{c,3}\circ
T_3^k$. By the Cramer-Wold device, the Billingsley-Ibragimov Central Limit Theorem for martingale differences and the fact that
$\norm{v_{a,1}}_2=\norm{v_{b,2}}_2=\norm{v_{c,3}}_2=1$, the
vector $V_{\ell,m,n}$ converges in distribution as
$\min\ens{\ell,m,n}\to\infty$ to $$V:=\pr{N_1^{\pr{1}},\dots,N_K^{\pr{1}},
N_1^{\pr{2}},\dots,N_K^{\pr{2}},N_1^{\pr{3}},\dots,N_K^{\pr{3}}}\;,$$
where $N_a^{\pr{1}}$, $N_b^{\pr{2}}$ and $N_c^{\pr{3}}$ are like in
the statement of Theorem~\ref{thm:TLC_product}.
Now, \eqref{eq:tlc_produit_somme_de_K_elements} follows from an
application of the continuous mapping theorem, that is,
$g\pr{V_{\ell,m,n}}\to g\pr{V}$, where $g\colon \R^{3K}\to \R$ is
defined as
\begin{equation*}
  g\pr{x_1,\dots,x_K,y_1,\dots,y_K,z_1,\dots,z_K}
  =\sum_{a,b,c=1}^K\lambda_{a,b,c}\pr{f} x_ay_bz_c.
\end{equation*}
This ends the proof of Theorem~\ref{thm:TLC_product}.
\end{proof}

\section{Limit law in the general case, for 2-dimensional field}\label{dim2}
As shown in \cite{MR3913270}, for a random field of martingale differences we 
have a CLT with convergence towards a mixture
of normal laws (see Theorem below).
In Section \ref{product-type CLT}, for $f$ $\I$-measurable it was precised which mixtures can appear as limit laws (for definition
of the factor $\I$ see Section~\ref{product-type factor}).
Here we deal with the same question for the general case of a martingale difference $f\in  \mathbb{L}^2$.
We reduce our study to the case of (ergodic) $\Z^2$-actions. In many cases it is because we have not succeeded
to extend the proofs to $d>2$.

As shown in Section \ref{proj-mart}, if $f$ is a martingale difference, so is $\E{f | \I}$ and also
$f - \E{f | \I}$. Recall that the limit laws for the random field generated by $\E{f | \I}$ have been determined in Section \ref{product-type CLT}).
In Subsection \ref{queue-property}, we give a sufficient condition guaranteeing convergence of the random field generated by $f - \E{f | \I}$ to a normal law.
We show in Subsection \ref{convolution} that, under the same condition the random field generated by $f$ is the convolution of the preceding ones. 

In Subsection \ref{entropie} we establish the result announced at the
end of the Introduction.

Eventually in Subsection \ref{nnormal_ex} we give
an example of a field of martingale differences generated by  $f - \E{f | \I}$ where the limit law is not normal.
It remains an open question which mixtures of normal laws can appear as limits in the CLT for $f - \E{f | \I}$.

In all this Section, $f\circ T_{i,j}$ is a field of martingale
differences adapted to a completely commuting filtration $\F_{i,j}$.

\subsection{Limit law for an increment orthogonal to the factor of product type}\label{queue-property}

Recall that $f\circ T_{i,j}$ is a field of martingale differences and
as shown in Part 1, $(f - \E{f | \I})\circ T_{i,j}$ are
martingale differences as well.

Let us begin by recalling Theorem 1 in \cite{MR3913270} which gives information 
on the limit law in the CLT. It will be stated and used here for $d=2$ but 
extends to any dimension.
\begin{theorem*} 
When $\min\{m,n\}\to\infty$ the random variables $\frac1{\sqrt{mn}}\sum_{i=1}^m\sum_{j=1}^nf\circ T_{i,j}$ converge in distribution to a law with characteristic function $\E{\exp(-\eta^2t^2/2)}$ where $\eta^2$ is a positive random variable such that $\E{\eta^2}=\norm{f}^2$. The random variables $\frac1{mn}\sum_{i=1}^m\left(\sum_{j=1}^n f\circ T_{i,j}\right)^2$ converge in distribution to $\eta^2$.
\end{theorem*}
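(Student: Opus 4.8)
The plan is to collapse the double sum to a one-parameter martingale in the first coordinate and then run a martingale central limit theorem conditionally on the invariant $\sigma$-algebra of $T_{1,0}$. For fixed $n$ put $D_i^{(n)}=\sum_{j=1}^n f\circ T_{i,j}$. Using the covariance relation $\E{g\circ T_{i,j}\,|\,T_{i,j}^{-1}\mathcal H}=\E{g\,|\,\mathcal H}\circ T_{i,j}$ with $\mathcal H=\F_{-1,\infty}$, for which $T_{i,j}^{-1}\mathcal H=\F_{i-1,\infty}$, together with the martingale condition $\E{f\,|\,\F_{-1,\infty}}=0$, one gets $\E{f\circ T_{i,j}\,|\,\F_{i-1,\infty}}=0$; summing over $j$, the row sums $(D_i^{(n)})_{i}$ are martingale differences, stationary under $T_{1,0}$ and adapted to the increasing filtration $\mathcal G_i=\F_{i,\infty}$. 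Applying Lemma~\ref{class} to $S=T_{1,0}$ and $\F_n=\F_{n,\infty}$ shows that $\I_1$, the $\sigma$-algebra of $T_{1,0}$-invariant sets, is contained in $\bigcap_i\F_{i,\infty}$, hence in every $\mathcal G_{i-1}$; this inclusion is exactly what allows the limit law to be random.

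\emph{Conditional one-dimensional CLT.} Since $\I_1\subset\mathcal G_{i-1}$, the row process is still a martingale difference sequence once we condition on $\I_1$, and conditioning on $\I_1$ renders the $T_{1,0}$-action ergodic. I would therefore fix $n$, disintegrate over $\I_1$, and apply the Billingsley--Ibragimov martingale CLT on each fibre: as $m\to\infty$, $\frac1{\sqrt m}\sum_{i=1}^m D_i^{(n)}$ converges in distribution to a mixed normal law with characteristic function $t\mapsto\E{\exp(-\tfrac{t^2}{2}\sigma_n^2)}$, where $\sigma_n^2=\E{(D_0^{(n)})^2\,|\,\I_1}$ is the $\I_1$-conditional variance (equal, by Birkhoff, to the a.s.\ limit of $\frac1m\sum_i(D_i^{(n)})^2$). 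Dividing by $\sqrt n$, the inner iterated limit of $\frac1{\sqrt{mn}}\sum_{i,j}f\circ T_{i,j}$ has characteristic function $\E{\exp(-\tfrac{t^2}{2}\,\sigma_n^2/n)}$.

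\emph{The second limit and the identification of $\eta^2$.} It remains to let $n\to\infty$. Orthogonality of the column increments (the $(f\circ T_{0,j})_j$ are martingale differences for $(\F_{\infty,j})_j$ because $\E{f\,|\,\F_{\infty,-1}}=0$) gives $\E{(D_0^{(n)})^2}=n\norm f^2$, hence $\E{\sigma_n^2/n}=\norm f^2$ for every $n$. I would then show that $\sigma_n^2/n$ converges in distribution to a random variable $\eta^2$; by continuity and boundedness of $x\mapsto e^{-xt^2/2}$ this passes to the characteristic functions and yields the claimed limit law $\E{\exp(-\eta^2 t^2/2)}$, while $\E{\eta^2}=\norm f^2$ follows from the constant means plus a uniform-integrability check. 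The same $\eta^2$ governs the second assertion: for fixed $n$, Birkhoff gives $\frac1{mn}\sum_i(\sum_j f\circ T_{i,j})^2\to\sigma_n^2/n$ a.s., and the distributional convergence of $\sigma_n^2/n$ delivers $\frac1{mn}\sum_i(\sum_j f\circ T_{i,j})^2\to\eta^2$ in law. The crucial phenomenon is that conditioning on $\I_1$ destroys the column-martingale structure (the values of the $j$-direction martingale may themselves be $\I_1$-measurable), so $\sigma_n^2/n$ genuinely fluctuates and $\eta^2$ is in general non-constant — this is the mechanism producing a non-normal limit.

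\emph{Main obstacle.} The delicate point is that the theorem asserts a \emph{joint} limit as $\min\{m,n\}\to\infty$, whereas the scheme above produces only the iterated limit $\lim_n\lim_m$. Worse, $\sigma_n^2/n$ converges merely in distribution and not in probability, so the standard martingale CLT with quadratic variation tending in probability to a constant is inapplicable; this is precisely why the conditioning-on-$\I_1$ device is indispensable and why a bare triangular-array theorem cannot be invoked. Upgrading the iterated limit to the joint one requires uniform-in-$n$ control of a Berry--Esseen-type error in the inner CLT, or a direct estimate comparing the characteristic function of $\frac1{\sqrt{mn}}\sum_{i,j}f\circ T_{i,j}$ with $\E{\exp(-\tfrac{t^2}{2}\sigma_n^2/n)}$ that is uniform in $m$; establishing this uniformity is the heart of the argument.
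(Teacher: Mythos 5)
First, a point of reference: the paper does not actually prove this statement. It is Theorem 1 of \cite{MR3913270}, recalled verbatim at the start of Subsection \ref{queue-property} and used as a black box, so there is no internal proof to compare yours against. Judged on its own terms, your proposal correctly identifies a reasonable skeleton: the row sums $D_i^{(n)}=\sum_{j=1}^n f\circ T_{i,j}$ are stationary martingale differences for the filtration $(\F_{i,\infty})_i$, Lemma \ref{class} places $\I_1$ inside $\bigcap_i\F_{i,\infty}$, and the Billingsley--Ibragimov CLT for non-ergodic stationary martingale differences then gives, for each fixed $n$, a mixed normal limit with conditional variance $\sigma_n^2/n=\frac1n\E{(D_0^{(n)})^2\mid\I_1}$. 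But the two steps you defer are precisely the content of the theorem, so what you have is a reduction, not a proof.

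Concretely: (i) you assert without argument that $\sigma_n^2/n$ converges in distribution as $n\to\infty$. This is essentially equivalent to the second assertion of the theorem and is not a soft fact. Expanding the square, the diagonal part $\frac1n\sum_{j}\E{f^2\circ T_{0,j}\mid\I_1}$ converges by the ergodic theorem, but the off-diagonal part $\frac1n\sum_{j<k}\E{f\circ T_{0,j}\,f\circ T_{0,k}\mid\I_1}$ has no a priori reason to settle down; controlling exactly this quantity is what the paper's Proposition \ref{queue-condition} does under extra hypotheses, and in general it is the delicate part of \cite{MR3913270}. (ii) You correctly flag that your scheme yields only the iterated limit $\lim_n\lim_m$, whereas the theorem asserts convergence as $\min\{m,n\}\to\infty$, and you leave the required uniformity entirely open. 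An iterated-limit statement is strictly weaker, and the joint limit is the whole point of the multiparameter formulation, so this is a genuine gap rather than a technicality: a complete argument must either produce an estimate on the characteristic function of $\frac1{\sqrt{mn}}\sum_{i,j}f\circ T_{i,j}$ that is uniform in one index, or be restructured so the joint limit comes out directly. Finally, the uniform integrability of $(\sigma_n^2/n)_n$ needed to pass from $\E{\sigma_n^2/n}=\norm{f}^2$ to $\E{\eta^2}=\norm{f}^2$ under mere distributional convergence is also left unverified; it does hold (via uniform integrability of $\frac1n\bigl(\sum_{j\leq n}f\circ T_{0,j}\bigr)^2$ for square-integrable martingale differences), but it deserves a line.
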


\underline{Comment on the Theorem.} In fact, by the ergodic theorem, $\lim_{m\to\infty}\frac1{m}\sum_{i=1}^m\frac1n\left(\sum_{j=1}^n f\circ T_{i,j}\right)^2$ exists for each $n$ and the distribution of $\eta^2$ is the limit in distribution of this quantity when $n\to\infty$. (This can be seen as well in the proof of the Theorem or as a consequence of it.)

\begin{prop}\label{queue-condition}
Let $f\in \mathbb{L}^2\ominus \mathbb{L}^2(\I)$ be a martingale
difference. If, moreover, $f\in\mathbb{L}^4$ and
\begin{equation}\label{eq:hyp_pour_avoir_loi_normale}
  \lim_{\ell\to\infty}
  \norm{
    \E{f\mid\F_{\infty, -\ell}\vee \I_1 }}_2=0.
\end{equation}
then for $\min\{m,n\} \to\infty$, $(1/\sqrt{mn}) \sum_{i=1}^m\sum_{j=1}^n f\circ 
T_{i,j}$ converge in distribution
to a centered normal law with variance $\E{f^2}$.
\end{prop}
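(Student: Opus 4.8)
The plan is to prove that the random variance $\eta^2$ furnished by the quoted Theorem is almost surely equal to the constant $\sigma^2:=\E{f^2}$; its characteristic function $\E{\exp(-\eta^2t^2/2)}$ then equals $\exp(-\sigma^2t^2/2)$, which is the assertion. By the Comment following the Theorem, $\eta^2$ is the limit in distribution, as $n\to\infty$, of
$$\eta_n^2:=\frac1n\,\E{\pr{\sum_{j=1}^n f\circ T_{0,j}}^2\mid\I_1}.$$
Since $\E{\eta_n^2}=\sigma^2$ for every $n$ (the vertical increments $f\circ T_{0,j}$ are orthogonal), it suffices to prove that $\eta_n^2\to\sigma^2$ in $\mathbb{L}^1$, i.e. that $\eta_n^2$ concentrates at its mean.

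Set $g_j:=f\circ T_{0,j}$ and $M_j:=\sum_{i=1}^j g_i$; the condition $\E{f\mid\F_{\infty,-1}}=0$ means precisely that $\pr{g_j}$ is a sequence of martingale increments for the vertical filtration $\G_j:=\F_{\infty,j}$. Expanding the square and using that $\I_1$ is invariant under $T_{0,1}$, so that $\E{h\circ T_{0,j}\mid\I_1}=\E{h\mid\I_1}\circ T_{0,j}$, one gets
$$\eta_n^2=\frac1n\sum_{j=1}^n \phi_0\circ T_{0,j}+\frac2n\sum_{1\le t<u\le n}\phi_{u-t}\circ T_{0,t},\qquad \phi_s:=\E{f\cdot(f\circ T_{0,s})\mid\I_1}.$$
Now $T_{0,1}$ preserves $\I_1$, and its invariant $\sigma$-algebra inside $\I_1$ is $\I_1\cap\I_2$, which is trivial by ergodicity of the $\Z^2$-action; hence $T_{0,1}$ acts ergodically on $\pr{\Omega,\I_1,\mu}$. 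As $f\in\mathbb{L}^4$ forces $\phi_0\in\mathbb{L}^2$, the ergodic theorem gives $\frac1n\sum_j\phi_0\circ T_{0,j}\to\E{\phi_0}=\sigma^2$. The same ergodicity, together with $\E{\phi_s}=\E{f\cdot(f\circ T_{0,s})}=0$, shows that for each fixed $s\ge1$ one has $\frac1n\sum_t\phi_s\circ T_{0,t}\to0$, which disposes of any fixed finite block of short-range cross terms.

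Everything thus reduces to the off-diagonal sum $C_n:=\frac1n\sum_{1\le t<u\le n}\phi_{u-t}\circ T_{0,t}$, and this is where hypothesis \eqref{eq:hyp_pour_avoir_loi_normale} enters. The key identity is that, because $\I_1$ is $T_{0,1}$-invariant and $\G_j=T_{0,-j}\G_0$, one has $\E{g_u\mid\G_{u-\ell}\vee\I_1}=\psi_\ell\circ T_{0,u}$, where $\psi_\ell:=\E{f\mid\F_{\infty,-\ell}\vee\I_1}$; consequently, for $s\ge\ell$,
$$\phi_s=\E{f\cdot(\psi_\ell\circ T_{0,s})\mid\I_1},\qquad\text{so}\qquad\norm{\phi_s}_1\le\norm f_2\,\norm{\psi_\ell}_2.$$
Hypothesis \eqref{eq:hyp_pour_avoir_loi_normale} therefore makes $\norm{\phi_s}_1$ small uniformly in $s\ge\ell$ once $\ell$ is large. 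I would split $C_n$ at a fixed range $L$: the near part ($u-t\le L$) tends to $0$ as $n\to\infty$ by the ergodic argument above, while the far part ($u-t>L$) can be written, with $\ell=L+1$, as
$$C_n^{\mathrm{far}}=\frac1n\sum_{u=1}^n\E{M_{u-\ell}\,(\psi_{\ell}\circ T_{0,u})\mid\I_1}=\frac1n\,\E{P_n\mid\I_1},\qquad P_n:=\sum_{u=1}^n M_{u-\ell}\,g_u,$$
in which the one-sided $\ell$-step prediction $\psi_\ell$ is small, and $P_n$ is a martingale in $n$ for $\pr{\G_n}$ because $M_{u-\ell}$ is $\G_{u-1}$-measurable.

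The main obstacle is exactly the estimation of this far part. A term-by-term triangle inequality is useless: the sum ranges over order $n$ values of $u-t$, each contributing up to $\norm f_2\norm{\psi_\ell}_2$ even after division by $n$, and the growth $\norm{M_u}_2\sim\sqrt u$ turns a naive Cauchy–Schwarz bound into one that diverges with $n$. One must instead extract genuine cancellation from the double average, i.e. show that projecting the martingale $P_n$ onto $\I_1$ gains the factor $\norm{\psi_\ell}$ over the trivial size $\norm{P_n}_2=O(n)$. This is where the full strength of $f\in\mathbb{L}^4$ is spent: I expect to estimate $\norm{\E{P_n\mid\I_1}}_2$ by an $\mathbb{L}^2$ computation that uses the orthogonality of the increments $M_{u-\ell}g_u$, the independence of $\I_1$ and $\I_2$ (Proposition \ref{indep}), and the ergodicity of $T_{0,1}$ on $\I_1$, so as to obtain $\limsup_n\norm{C_n^{\mathrm{far}}}\le\varepsilon(\ell)$ with $\varepsilon(\ell)\to0$. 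Letting first $n\to\infty$ and then $L\to\infty$ would then give $C_n\to0$, hence $\eta_n^2\to\sigma^2$ and the normality of the limit law.
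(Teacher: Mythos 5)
Your overall route is the paper's: reduce to showing that the random variance $\eta^2$ is the constant $\E{f^2}$, compute $\lim_m V_{m,n}=\E{\pr{n^{-1/2}\sum_j f\circ T_{0,j}}^2\mid\I_1}$ by the ergodic theorem, dispose of the diagonal via independence of $\I_1$ and $\I_2$, split the cross terms at a lag $\ell$, and make the far part small using $\psi_\ell=\E{f\mid\F_{\infty,-\ell}\vee\I_1}$. (Your treatment of the near part, by the ergodic theorem applied to $\phi_s$ for each fixed $s\le L$, is a legitimate variant of the paper's Burkholder bound $\sqrt{3}\norm{f}_4^2\sqrt{\ell/n}$.) But the proof is not complete: you explicitly leave the far-part estimate as something you ``expect'' to obtain, and the ingredient you are missing is precisely the non-trivial heart of the argument. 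Orthogonality of the increments $M_{u-\ell}g_u$ only gives $\norm{P_n}_2=O(n)$, which, as you note, is useless; what is needed is that the \emph{projected} increments $\E{M_{u-\ell}g_u\mid\I_1}$ are still pairwise orthogonal, so that Pythagoras yields
\begin{equation*}
\norm{\E{P_n\mid\I_1}}_2^2=\sum_{u}\norm{\E{M_{u-\ell}\,g_u\mid\I_1}}_2^2\le \sum_u\norm{\psi_\ell}_4^2\norm{M_{u-\ell}}_4^2\le 3\norm{\psi_\ell}_4^2\norm{f}_4^2\, n^2 .
\end{equation*}
Conditioning on a sub-$\sigma$-algebra does not preserve the martingale-difference property in general, and here it cannot be deduced from $\I_1\subset\F_{\infty,k}$ (which is false in general, cf.\ the example of Subsection \ref{nnormal_ex}). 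The paper obtains it from the commutation of $\E{\cdot\mid\I_1}$ with $\E{\cdot\mid\F_{\infty,k}}$, which is Proposition \ref{pro}/Corollary \ref{co} applied to the $\Z$-action $(T_{k,0})$ with $\C$ trivial: this shows $\E{M_{u-\ell}g_u\mid\I_1}$ is $\F_{\infty,u}$-measurable and has zero conditional expectation given $\F_{\infty,u-1}$. Without this commutation step your ``$\mathbb{L}^2$ computation'' does not go through.

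A second, smaller gap: the Cauchy--Schwarz step that extracts $\psi_\ell$ from each projected increment produces $\norm{\psi_\ell}_4\norm{M_{u-\ell}}_4$, not $\norm{\psi_\ell}_2$; your displayed bound $\norm{\phi_s}_1\le\norm{f}_2\norm{\psi_\ell}_2$ is exactly the one you rightly discard. So you must upgrade hypothesis \eqref{eq:hyp_pour_avoir_loi_normale} from $\mathbb{L}^2$ to $\mathbb{L}^4$ convergence of $\psi_\ell$; this is where $f\in\mathbb{L}^4$ is used again, via uniform integrability of $\pr{\E{f\mid\G_\ell}}^4\le\E{f^4\mid\G_\ell}$ (Remark 2 of the paper). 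With these two points supplied, your outline closes and coincides with the paper's proof.
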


\underbar{Remark 1.} We have $\I_2\subset \F_{\infty, -\ell}$ (for every $\ell$) hence $\I \subset
\F_{\infty, -\ell}\vee \I_1$. Therefore $\|\E{f\,|\,\I}\|_2 \leq \|\E{f\,|\, \F_{\infty, -\ell}\vee \I_1}\|_2$
and, for any $f\in  \mathbb{L}^2$,  \eqref{eq:hyp_pour_avoir_loi_normale} implies $\E{f\,|\,\I} =0$. \\

\underbar{Remark 2.} For any $f\in  \mathbb{L}^2$, the sequence $\left(\left(\E{f\,|\, \F_{\infty, -\ell}\vee \I_1}\right)^2\right)$ is uniformly integrable ; indeed, denoting $\left(\G_\ell\right)$ a family of sub-$\sigma$-algebra we have
$$
\E{\left(\E{f\mid\G_\ell}\right)^2\ind{\left|\E{f\mid\G_\ell}\right|\geq C}}\leq\E{\E{f^2\mid\G_\ell}\ind{\left|\E{f\mid\G_\ell}\right|\geq C}}=\E{f^2\ind{\left|\E{f\mid\G_\ell}\right|\geq C}}\to 0
$$
when $C\to\infty$. \\As a consequence the property \eqref{eq:hyp_pour_avoir_loi_normale} is equivalent to the convergence of  $\left(\E{f\mid\F_{\infty, -\ell}\vee \I_1}\right)$ to zero in probability.

Similarly, for $f\in  \mathbb{L}^4$, condition  \eqref{eq:hyp_pour_avoir_loi_normale} implies that
\begin{equation}\label{eq:hyp_pour_avoir_loi_normale_4}
  \lim_{\ell\to\infty}
  \norm{
    \E{f\mid\F_{\infty, -\ell}\vee \I_1 }}_4=0.
\end{equation}\\

\underbar{Remark 3.} Of course, condition \eqref{eq:hyp_pour_avoir_loi_normale} implies
\begin{equation}\label{eq:condition_esperance_cond_avecF_infty_I1}
\E{f\,|\, \F_{\infty, -\infty}\vee \I_1}=0\;.
\end{equation}
In Subsection \ref{trivial-queue} we give an example where
$$
\bigcap_\ell \F_{\infty, -\ell}\vee \I_1\neq\F_{\infty, -\infty}\vee \I_1\;,
$$
showing that \eqref{eq:condition_esperance_cond_avecF_infty_I1} can be
satisfied without \eqref{eq:hyp_pour_avoir_loi_normale}. But we do
not know if \eqref{eq:condition_esperance_cond_avecF_infty_I1} is
sufficient in order to obtain the conclusion of Proposition
\ref{queue-condition}.
\begin{proof}[Proof of Proposition \ref{queue-condition}]
Define 
\begin{equation}\label{eq:def_Vmn}
 V_{m,n}=\frac 1m\sum_{i=1}^m \pr{\frac 1{\sqrt n}\sum_{j=1}^n f\circ 
T_{i,j}}^2.
\end{equation}
Following the Theorem just recalled above, it is sufficient to prove
that
$$
\lim_{n}\lim_{m} V_{m,n}=\norm{f}_2^2.
$$

By the ergodic theorem
$$
\lim_m V_{m,n}= \E{\pr{\frac 1{\sqrt n}\sum_{j=1}^n f\circ T_{0,j}}^2\mid\I_1}
$$
and the square terms give the expected limit; indeed, again by ergodic
theorem,
$$
\lim_n \E{\frac 1{n}\sum_{j=1}^n f^2\circ T_{0,j}\mid\I_1}=\E{\E{f^2\mid\I_2}\mid\I_1}
$$
and $\E{\E{f^2\mid\I_2}\mid\I_1}=\E{f^2}$ since the algebra $\I_1$ and $\I_2$ are independent.

Thus it remains to prove that
\begin{equation}\label{eq:termes-croises}
\lim_n\E{\frac1n\sum_{1\leq j< k\leq n}f\circ T_{0,j}\;f\circ T_{0,k}\mid\I_1} =0
\end{equation}
We'll need to work with order two moments of these sums, which are finite
thanks to the assumption that $f\in\mathbb{L}^4$.

Let us write, for $\ell>0$,
\begin{multline*}
  \frac1n\sum_{1\leq j< k\leq n}f\circ T_{0,j}\;f\circ T_{0,k} \\=
  \frac1n \sum_{k=\ell+1}^{n} f \circ T_{0,k}
\sum_{j=1}^{k-\ell}  f \circ T_{0,j} +
  \frac1n \sum_{k=2}^{n}  f \circ T_{0,k}
\sum_{j=(k-\ell+1)\vee 1}^{k-1} f \circ T_{0,j} =:
  I + II. 
\end{multline*}

The sequence $\left(f \circ T_{0,k} \sum_{j=(k-\ell+1)\vee 1}^{k-1} f\circ
T_{0,j}\right)_{k>1}$ is a sequence of square integrable martingale differences (remember that $f\in  \mathbb{L}^4$), adapted to the filtration $\left(\F_{0,k}\right)$.
Pythagore followed by Cauchy-Schwarz gives
$$
 \norm{II}_2^2=  \frac1{n^2} \sum_{k=2}^{n} \norm{ f \circ T_{0,k}
\sum_{j=(k-\ell+1)\vee 1}^{k-1} f \circ T_{0,j}}_2^2
  \leq \frac1{n^2} \sum_{k=2}^{n} 
\norm{f\circ T_{0,k}}_4^2\norm{\sum_{j=(k-\ell+1)\vee 1}^{k-1} f \circ T_{0,j}}_4^2
$$
Burkholder's inequality (see for example \cite{MR2472010} gives
$$
\norm{\sum_{j=(k-\ell+1)\vee 1}^{k-1} f \circ T_{0,j}}_4^2\leq3\sum_{j=(k-\ell+1)\vee 1}^{k-1}\norm{f\circ T_{0,j}}_4^2\leq 3\ell\norm{f}_4^2
$$

hence 
\begin{equation}\label{eq:controle_norme_II}
\norm{\E{II\mid\I_1}}_2\leq\norm{II}_2\leq \sqrt{3}\norm{f}_4^2\sqrt{\frac{\ell}{n}}.
\end{equation}\\

The sequences $\left(f \circ T_{0,k}
\sum_{j=1}^{k-\ell}  f \circ T_{0,j}\right)_{k>\ell}$ is a sequence of square integrable martingale differences (remember that $f\in  \mathbb{L}^4$), adapted to the filtration $\left(\F_{0,k}\right)$. Applying Proposition \ref{pro} to the $\Z$-action $(T_{k,0})$, $\I=\I_1$, $\F=\F_{\infty,k}$ and $\C$ trivial, we obtain that conditional expectations with respect to $\I_1$ and $\F_{\infty,k}$ commute. Moreover, since $\I_1\subset\F_{0,\infty}$, conditional expectations with respect to  $\I_1$ and $\F_{0,\infty}$ commute. Using the commuting property of the filtration $\left(\F_{i,j}\right)$, we affirm that conditional expectations with respect to $\I_1$ and $\F_{0,k}$ commute. As a consequence the sequence
$$
\E{\left(f \circ T_{0,k}
\sum_{j=1}^{k-\ell}  f \circ T_{0,j}\mid\I_1}\right)_{k>\ell}
$$
is a sequence of martingale differences, and in particular it is an orthogonal sequence in $ \mathbb{L}^2$. Using successively Pythagore, invariance under $T_{0,k}$, properties of the conditional expectation, Cauchy-Schwarz and Burkholder, we write

\begin{align*}
\norm{\E{I\mid\I_1}}_2^2&=  \frac1{n^2} \sum_{k=\ell+1}^{n}
  \norm{\E{f \circ T_{0,k}  \sum_{j=1}^{k-\ell}  f \circ T_{0,j}
    \mid\I_1}}_2^2\\
    &=\frac1{n^2} \sum_{k=\ell+1}^{n}
  \norm{\E{f   \sum_{j=1-k}^{-\ell}  f \circ T_{0,j}
    \mid\I_1}}_2^2\\
    &=\frac1{n^2} \sum_{k=\ell+1}^{n}
  \norm{\E{f\mid\F_{\infty,-\ell}\vee\I_1}   \sum_{j=1-k}^{-\ell}  f \circ T_{0,j}
    }_2^2\\
    &\leq\norm{\E{f\mid\F_{\infty,-\ell}\vee\I_1}}_4^2 \ \frac1n\sum_{k=\ell+1}^n\norm{\frac1{\sqrt n}\sum_{j=1-k}^{-\ell}  f \circ T_{0,j}}_4^2\\
    &\leq 3\norm{\E{f\mid\F_{\infty,-\ell}\vee\I_1}}_4^2\norm{f}_4^2.
    \end{align*}

This estimation, associated with \eqref{eq:controle_norme_II}, gives
$$
\norm{\E{\frac1n\sum_{1\leq j< k\leq n}f\circ T_{0,j}\;f\circ T_{0,k}\mid\I_1}}_2\leq\sqrt3\left(\norm{\E{f\mid\F_{\infty,-\ell}\vee\I_1}}_4\norm{f}_4+\norm{f}_4^2\sqrt{\frac{\ell}{n}}\right),
$$
hence for each $\ell$, 
\begin{equation*}
\limsup_{n\to\infty}\norm{\E{\frac1n\sum_{1\leq j< k\leq n}f\circ T_{0,j}\;f\circ T_{0,k}\mid\I_1}}_2\leq \sqrt{3}
\norm{\E{f  \,|\,\I_{1}\vee \F_{\infty, -\ell}}}_4\norm{f}_4,
\end{equation*}
and, thanks to \eqref{eq:hyp_pour_avoir_loi_normale_4}, we conclude that \eqref{eq:termes-croises} is true.
\end{proof}

\subsection{Entropy condition for convergence to a normal law}\label{entropie}

\begin{theorem}\label{pos-entropy}
There exists a martingale difference $f\in  \mathbb{L}^2$ with non normal limit in the CLT if and only if
$T_{1,0}$ is of positive entropy in $\I_2$ and $T_{0,1}$ is of positive entropy in $\I_1$.
\end{theorem}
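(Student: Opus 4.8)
The plan is to translate both entropy hypotheses into statements about the increment spaces attached to the factor $\I$, then feed nonzero increments into the two-dimensional form of Theorem~\ref{thm:TLC_product} (for the non-normal example) and the triviality of increments into Proposition~\ref{queue-condition} (for the normality statement). First I would set up the dictionary between entropy and martingale increments. By Proposition~\ref{indep} and the theorem of Section~\ref{proj-mart}, $\I=\I_1\vee\I_2$ carries a product-type action and a completely commuting filtration $\pr{\I\cap\F_{i,j}}$; writing $\I\cong\Omega_1\times\Omega_2$ as in Section~\ref{product-type CLT}, the transformation $T_{1,0}$ on $\I_2$ is $T_1$ acting on $\Omega_1$ with the increasing filtration $\pr{\Fca_i^{\pr{1}}}_i$, and symmetrically $T_{0,1}$ on $\I_1$ is $T_2$ on $\Omega_2$ with $\pr{\Fca_j^{\pr{2}}}_j$. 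Assuming the induced coordinate filtrations are exhaustive (that is $\bigvee_i\Fca_i^{\pr{1}}=\Aca_1$ and $\bigvee_j\Fca_j^{\pr{2}}=\Aca_2$, which one deduces from $\F_{\infty,\infty}=\A$), the Rokhlin--Sinai theorem yields
\begin{equation*}
h\pr{T_{1,0}\text{ on }\I_2}=H\pr{\Fca_0^{\pr{1}}\mid\Fca_{-1}^{\pr{1}}},\qquad h\pr{T_{0,1}\text{ on }\I_1}=H\pr{\Fca_0^{\pr{2}}\mid\Fca_{-1}^{\pr{2}}}.
\end{equation*}
Thus positivity of the entropy is equivalent to $\Fca_0^{\pr{1}}\neq\Fca_{-1}^{\pr{1}}$, i.e.\ to the increment space $\mathbb L^2\pr{\Fca_0^{\pr{1}}}\ominus\mathbb L^2\pr{\Fca_{-1}^{\pr{1}}}$ being nonzero, and likewise on the second coordinate. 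This is the link to the building blocks of Lemma~\ref{lem:base_Hilbert}.

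For the \emph{if} direction, suppose both entropies are positive. By the dictionary I may pick unit vectors $v_1\in\mathbb L^2\pr{\Fca_0^{\pr{1}}}\ominus\mathbb L^2\pr{\Fca_{-1}^{\pr{1}}}$ and $v_2\in\mathbb L^2\pr{\Fca_0^{\pr{2}}}\ominus\mathbb L^2\pr{\Fca_{-1}^{\pr{2}}}$, and set $f\pr{\omega_1,\omega_2}=v_1\pr{\omega_1}v_2\pr{\omega_2}$. This $f$ is a single basis element of $\Delta$ in the sense of Lemma~\ref{lem:base_Hilbert}, hence an $\I$-measurable martingale difference adapted to $\pr{\F_{i,j}}$. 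Theorem~\ref{thm:TLC_product} then gives the limit law $N^{\pr{1}}N^{\pr{2}}$, a product of two independent standard normal variables, whose characteristic function $t\mapsto\pr{1+t^2}^{-1/2}$ is not Gaussian; so $f$ has a non-normal limit.

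For the \emph{only if} direction I would prove the contrapositive: if one entropy vanishes, every square integrable martingale difference $f$ has a normal limit. Say $h\pr{T_{1,0}\text{ on }\I_2}=0$, the other case being symmetric. By the dictionary $\Fca_0^{\pr{1}}=\Fca_{-1}^{\pr{1}}$, so the first increment space is trivial and Lemma~\ref{lem:base_Hilbert} forces $\Delta\cap\mathbb L^2\pr{\I}=\ens{0}$; since $\E{f\mid\I}$ is an $\I$-measurable martingale difference by the theorem of Section~\ref{proj-mart}, we get $\E{f\mid\I}=0$, that is $f\in\mathbb L^2\ominus\mathbb L^2\pr{\I}$. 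Moreover exhaustiveness together with $\Fca_0^{\pr{1}}=\Fca_{-1}^{\pr{1}}$ forces $\Fca_0^{\pr{1}}=\Aca_1$, whence $T_1^{\ell}\Fca_0^{\pr{1}}=\Aca_1$ and therefore $\I_2\subset\F_{-\ell,\infty}$ for every $\ell$. Consequently the conditioning $\sigma$-algebra in the symmetric form of the tail hypothesis collapses, $\F_{-\ell,\infty}\vee\I_2=\F_{-\ell,\infty}$, and the martingale relation $\E{f\mid\F_{-1,\infty}}=0$ gives $\E{f\mid\F_{-\ell,\infty}\vee\I_2}=\E{f\mid\F_{-\ell,\infty}}=0$ for all $\ell\geq1$. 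So the symmetric version of hypothesis \eqref{eq:hyp_pour_avoir_loi_normale} holds trivially, and Proposition~\ref{queue-condition} yields a normal limit whenever $f\in\mathbb L^4$. For general $f\in\mathbb L^2$ I would truncate and project back onto $\Delta$ (conditional expectations being $\mathbb L^4$-contractions, the projection onto $\Delta$ keeps $\mathbb L^4$ functions in $\mathbb L^4$), obtaining $f_K\in\Delta\cap\mathbb L^4$ with $f_K\to f$ in $\mathbb L^2$; each $f_K$ still satisfies the collapsed tail condition, hence has a normal limit, and the orthogonality-of-increments estimate used in the proof of Theorem~\ref{thm:TLC_product} transfers normality to the limit of $f$.

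The main obstacle is the dictionary step: establishing that the induced coordinate filtrations on $\I$ are exhaustive and that the Rokhlin--Sinai identity applies, so that positivity of entropy is \emph{exactly} equivalent to nontriviality of the increment spaces. This is what ties the dynamical hypothesis to the Hilbert-basis description of $\Delta$ in Lemma~\ref{lem:base_Hilbert}, and it is also what makes the tail $\sigma$-algebra collapse in the zero-entropy case. A secondary technical point is the $\mathbb L^4$-to-$\mathbb L^2$ reduction, where one must check that the approximants stay inside $\Delta$, retain the (here trivially satisfied) tail condition, and transmit normality through the uniform-in-$\pr{m,n}$ approximation.
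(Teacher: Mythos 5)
Your \emph{if} direction contains a genuine gap. Everything rests on the ``dictionary'' asserting that positive entropy of $T_{1,0}$ on $\I_2$ is equivalent to non-triviality of the increment space $\mathbb L^2\pr{\Fca_0^{\pr 1}}\ominus\mathbb L^2\pr{\Fca_{-1}^{\pr 1}}$ of the coordinate filtration induced by the \emph{given} $\pr{\F_{i,j}}$. Only one implication is true: a strictly increasing invariant filtration forces positive entropy. The converse fails, and the Rokhlin--Sinai theorem does not give $h=H\pr{\Fca_0^{\pr 1}\mid\Fca_{-1}^{\pr 1}}$ for an arbitrary exhaustive invariant filtration; it only provides \emph{one} (perfect) filtration for which this identity holds. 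Concretely, take $\Omega=\ens{0,1}^{\Z}\times\ens{0,1}^{\Z}$ with two commuting Bernoulli shifts and $\F_{i,j}=\A_1\otimes\B_j$, where $\B_j$ is the natural filtration of the second shift: this is a completely commuting invariant filtration with $\F_{\infty,\infty}=\A$, both entropies are positive, yet $\I_2\cap\F_{i,\infty}=\I_2$ for every $i$, so your increment space is $\ens{0}$ (and the only adapted martingale difference is $0$, since $\F_{-1,\infty}=\A$). This example also shows that the statement cannot be proved for the given filtration: one must \emph{construct} a new completely commuting filtration together with $f$. The paper does exactly this via Sinai's theorem: positive entropy of $T_{1,0}$ on $\I_2$ yields a nontrivial i.i.d.\ process $h_1\circ T_{1,0}^{i}$ inside $\I_2$, similarly for $T_{0,1}$ on $\I_1$, and the product $f=h_1h_2$, equipped with the natural filtration of these two mutually independent i.i.d.\ processes (independence coming from Proposition \ref{indep}), is a Wang--Woodroofe field whose limit law is a product of two independent normals. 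Your argument is missing precisely this step, and without it the forward implication is not established.

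Your \emph{only if} direction is essentially sound and takes a genuinely different route from the paper's. In your coordinates, zero entropy of $T_{1,0}$ on $\I_2$ forces $\I_2\subseteq\F_{-\ell,\infty}$ for every $\ell$: the filtration $\pr{\I_2\cap\F_{i,\infty}}_i$ is increasing and invariant for $T_{1,0}$ restricted to $\I_2$, hence constant under the zero-entropy hypothesis (a strict increase would produce a two-set partition of positive entropy), and it is exhaustive. That exhaustiveness, $\bigvee_i\pr{\I_2\cap\F_{i,\infty}}=\I_2$, is the point you must actually write out --- it does not follow from $\F_{\infty,\infty}=\A$ alone, but it does follow from Corollary \ref{co} applied to the $\Z$-action $\pr{T_{0,j}}$, which gives $\E{g\mid\F_{i,\infty}}=\E{g\mid\I_2\cap\F_{i,\infty}}$ for $g\in\mathbb L^2\pr{\I_2}$, and then martingale convergence as $i\to\infty$. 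Granting this, you obtain $\F_{-\ell,\infty}\vee\I_2=\F_{-\ell,\infty}$ and hence the symmetric form of \eqref{eq:hyp_pour_avoir_loi_normale} trivially; this is \emph{stronger} than what the paper proves, namely $\E{g\mid\F_{\infty,-\ell}\vee\I_1}=\E{g\mid\F_{\infty,-\ell}}$ for $\F_{\infty,0}$-measurable $g$ via Theorem 2 of \cite{MR0928378} on the Pinsker algebra. Your route trades that external reference for the exhaustiveness lemma. The final $\mathbb L^4$-to-$\mathbb L^2$ reduction is the same Peligrad--Voln\'y truncation as in the paper, though you should use the explicit corrected truncation $f\ind{\abs{f}\leq C}-\E{f\ind{\abs{f}\leq C}\mid\F_{-1,0}}-\E{f\ind{\abs{f}\leq C}\mid\F_{0,-1}}+\E{f\ind{\abs{f}\leq C}\mid\F_{-1,-1}}$ rather than a vague ``projection onto $\Delta$'', since $\Delta$ was defined only for product-type systems.
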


\begin{proof}
1. Suppose that $T_{0,1}$ is of positive entropy in $\I_1$ and $T_{1,0}$ is of positive entropy in $\I_2$.
For a measure preserving and bimeasurable transformation $S$, positive
entropy implies existence of a nontrivial i.i.d.\ sequence
of the form $h\circ S^i$.  In the factor of product type given by $\I= \I_1\vee \I_2$ we thus get a non trivial field of 
Wang-Woodroofe type.

2. Suppose that the transformation $T_{0,1}$ is of zero entropy on $\I_1$. Then $\I_1$ is an invariant
sub-$\sigma$-algebra of the Pinsker sigma algebra for $T_{0,1}$ hence by Theorem 
2 in \cite{MR0928378} we have that for any integrable and 
$\F_{\infty,0}$-measurable function $g$, 
$\E{g\,|\, \F_{\infty, -\ell}\vee \I_1} = \E{g\,|\, \F_{\infty, -\ell}}$ for all $\ell >0$. Hence, if $g$ is a square integrable martingale difference, we have 
\begin{equation}\label{eq:strong_queue_condition}
\E{g\,|\, \F_{\infty, -\ell}\vee \I_1} = 0 \quad\text{for all } \ell >0. 
\end{equation}
Thus, by Proposition \ref{queue-condition}, we know that if $f$ is a martingale 
difference with finite fourth moment, then the CLT applies with a normal limit. 
For a square integrable martingale difference $f$ we will use a 
Peligrad-Voln\'y 
trick (see \cite{MR4125956}) writing $f$ as a sum of a bounded martingale 
difference and a rest small in $\mathbb L^2$. We'll conclude that the CLT 
applies to $f$ with a normal limit. Here are the details of this approximation 
argument.

Theorem 4.2 in \cite{MR0233396} tells that 
 for stochastic processes $\pr{Y_{C,n}}_{C,n\geq 1}$, 
$\pr{Y'_n}_{n\geq 1}$, $\pr{Z_C}_{C\geq 1}$, and a random variable $Z$, 
satisfying
\begin{align*}
& Y_{C,n} \longrightarrow Z_C \mbox{ in distribution, as }    n\rightarrow 
\infty, \mbox{ for each } C, \\
& Z_C\longrightarrow Z \mbox{ in distribution, as }  C\rightarrow \infty,\\
& 
\lim_{C\to\infty}\limsup_{n\to\infty}\PP\pr{\abs{Y'_n-Y_{C,n}}>\varepsilon}=0, 
\mbox{ for all } \varepsilon >0, 
\end{align*}
we may conclude that $Y'_n\to Z$ in distribution. 

Such a result extends readily to the case of a double-indexed process where the 
minimum of the indices go to infinity, by considering sequences
$\pr{m_k,n_k}$
which both go to infinity. By bounding the $\limsup$ in the last condition by a 
supremum, we thus get that for $\pr{Y_{C,m,n}}_{C,m,n\geq 1}$, 
$\pr{Y'_{m,n}}_{m,n\geq 1}$, $\pr{Z_C}_{C\geq 1}$, and a random variable $Z$, 
satisfying
\begin{align*}
& Y_{C,m,n} \longrightarrow Z_C \mbox{ in distribution, as }    
\min\ens{m,n}\rightarrow 
\infty, \mbox{ for each } C, \\
& Z_C\longrightarrow Z \mbox{ in distribution, as }  C\rightarrow \infty,\\
& 
\lim_{C\to\infty}\sup_{m,n\geq 
1}\PP\pr{\abs{Y'_{m,n}-Y_{C,m,n}}>\varepsilon}=0, 
\mbox{ for all } \varepsilon >0, 
\end{align*}
we may conclude that $Y'_{m,n}\to Z$ in distribution as $\min\ens{m,n}\to 
\infty$.

We apply this in the following setting: define 
\begin{equation*}
 f_C:=f\mathds{1}_{\abs{f}\leq C}-\E{f\mathds{1}_{\abs{f}\leq 
C}\mid\Fca_{-1,0}}-\E{f\mathds{1}_{\abs{f}\leq 
C}\mid\Fca_{0,-1}}+\E{f\mathds{1}_{\abs{f}\leq C}\mid\Fca_{-1,-1}},
\end{equation*}
\begin{equation*}
 Y'_{m,n}=\frac 1{\sqrt{mn}}\sum_{i=1}^m\sum_{j=1}^n f\circ T_{i,j}, \quad 
Y_{C,m,n}=\frac 1{\sqrt{mn}}\sum_{i=1}^m\sum_{j=1}^n f_C\circ T_{i,j}
\end{equation*}
By Proposition \ref{queue-condition}, we know that for each $C$,
$Y_{C,m,n} \longrightarrow Z_C$ in distribution as $\min\ens{m,n}\to\infty$, 
where $Z_C$ has a centered normal distribution with variance $\E{f_C^2}$. 
Moreover, $\norm{f-f_C}_2\leq 4\norm{f\mathds{1}_{\abs{f}>C}}_2$ 
hence $Z_C\longrightarrow Z$ in distribution as $C$ goes to infinity, 
where $Z$  has a centered normal distribution with variance $\E{f^2}$. 
Finally, using the fact that $f-f_C$ is a martingale difference, we get, by 
Tchebychev's inequality, 
\begin{multline*}
 \sup_{m,n\geq 
1}\PP\pr{\abs{Y'_{m,n}-Y_{C,m,n}}>\varepsilon}\\
\leq\frac 1{\varepsilon^2} \sup_{m,n\geq 
1}
\E{\pr{\frac 1{\sqrt{mn}}\sum_{i=1}^m\sum_{j=1}^n \pr{f-f_C}\circ 
T_{i,j}}^2}= \frac{1}{\eps^2}\E{\pr{f-f_C}^2},
\end{multline*}
which goes to $0$ as $C$ goes to infinity.
\end{proof}

Notice that Theorem \ref{pos-entropy} improves the result in  \cite{MR3427925}
  which tells that the limit distribution is normal as soon as one of the 
$\sigma$-algebra $\I_1$ or $\I_2$ is trivial.
However, the result in  \cite{MR3427925}
  applies to all $d>1$ while here it applies to $d=2$ only. The case of $d>2$ 
remains open.

\subsection{When the two parts are asymptotically independent}\label{convolution}

We refer here once more to Theorem 1 in \cite{MR3913270}
 which has been recalled in sub-section \ref{queue-property}.
\begin{prop} Let the characteristic functions of the limit laws for the random fields generated by $f$, $\E{f\mid \I}$ and $f-\E{f\mid \I}$ 
be respectively
$$
  \varphi_1(t) = \E {e^{-\frac12t^2\eta_1^2}}, \quad
  \varphi_2(t) = \E {e^{-\frac12t^2\eta_2^2}}, \quad
  \varphi_3(t) = \E {e^{-\frac12t^2\eta_3^2}}.
$$

If $\eta_3^2$ is a constant, in particular if \eqref{eq:hyp_pour_avoir_loi_normale} holds true, then $\varphi_1(t) = \varphi_2(t)\varphi_3(t)$, hence 
the limit law for the random field generated by $f$ is the convolution of limit laws for $\E{f\mid \I}$ and $f-\E{f\mid \I}$.
\end{prop}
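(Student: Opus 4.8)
The plan is to identify each mixing variable $\eta_t^2$ from the Comment following the recalled Theorem of Subsection~\ref{queue-property}, and then to prove that they add up: $\eta_1^2 \overset{d}{=} \eta_2^2+\eta_3^2$ once $\eta_3^2$ is deterministic. Write $g_1=f$, $g_2=\E{f\mid\I}$ and $g_3=f-\E{f\mid\I}$; by the Theorem of Section~\ref{proj-mart} all three are martingale differences adapted to $(\F_{i,j})$, so the recalled Theorem applies to each one, producing the mixing variables $\eta_1^2,\eta_2^2,\eta_3^2$ and the characteristic functions $\varphi_1,\varphi_2,\varphi_3$. For $t\in\{1,2,3\}$ I set
\[
 V_n^{(t)}=\E{\frac1n\pr{\sum_{j=1}^n g_t\circ T_{0,j}}^2\mid \I_1},
\]
so that, by the Comment (the $T_{1,0}$-ergodic average producing the conditional expectation with respect to $\I_1$), $V_n^{(t)}$ converges in distribution to $\eta_t^2$ as $n\to\infty$. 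From $g_1=g_2+g_3$ I would first record the algebraic identity
\[
 V_n^{(1)}=V_n^{(2)}+V_n^{(3)}+W_n,\qquad W_n:=\frac2n\,\E{\pr{\sum_{j=1}^n g_2\circ T_{0,j}}\pr{\sum_{k=1}^n g_3\circ T_{0,k}}\mid \I_1}.
\]

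The heart of the argument is that the cross term $W_n$ vanishes identically. Since $\I_1\subset\I$, the tower property gives $W_n=\frac2n\E{\E{(\sum_j g_2\circ T_{0,j})(\sum_k g_3\circ T_{0,k})\mid\I}\mid\I_1}$. Now $g_2=\E{f\mid\I}$ is $\I$-measurable and $\I$ is $T$-invariant, so each $g_2\circ T_{0,j}=\E{f\circ T_{0,j}\mid\I}$ is $\I$-measurable and the whole factor $\sum_j g_2\circ T_{0,j}$ pulls out of the inner conditional expectation. On the other hand $\E{g_3\mid\I}=\E{f\mid\I}-\E{f\mid\I}=0$, whence, again by $T$-invariance of $\I$, $\E{g_3\circ T_{0,k}\mid\I}=\E{g_3\mid\I}\circ T_{0,k}=0$ for every $k$. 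The inner conditional expectation is therefore $0$, so $W_n=0$ for all $n$, and $V_n^{(1)}=V_n^{(2)}+V_n^{(3)}$ exactly.

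It then remains to pass to the limit. Because $\eta_3^2$ is a constant $c$ (with $c=\E{g_3^2}$ from the identity $\E{\eta_3^2}=\norm{g_3}_2^2$ in the recalled Theorem), the convergence $V_n^{(3)}\to\eta_3^2=c$ in distribution is in fact convergence in probability to the constant $c$. Slutsky's lemma applied to $V_n^{(1)}=V_n^{(2)}+V_n^{(3)}$, with $V_n^{(2)}\to\eta_2^2$ in distribution and $V_n^{(3)}\to c$ in probability, yields $V_n^{(1)}\to\eta_2^2+c$ in distribution; comparing with $V_n^{(1)}\to\eta_1^2$ gives $\eta_1^2\overset{d}{=}\eta_2^2+c$. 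Taking the expectation of $e^{-t^2(\cdot)/2}$ and using $\varphi_3(t)=\E{e^{-t^2\eta_3^2/2}}=e^{-t^2c/2}$, this reads exactly $\varphi_1(t)=e^{-t^2c/2}\varphi_2(t)=\varphi_2(t)\varphi_3(t)$, which is the asserted convolution.

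The only step that genuinely needs an idea is the vanishing of the cross term $W_n$; once one observes that the orthogonality of $g_3$ to $\I$ holds in the strong conditional form $\E{g_3\mid\I}=0$, and that this survives the shifts $T_{0,k}$ because $\I$ is $T$-invariant, the computation collapses and no delicate fourth-moment estimate of the kind used in Proposition~\ref{queue-condition} is needed. The hypothesis that $\eta_3^2$ be constant enters only at the very last step, solely to upgrade the distributional convergence of $V_n^{(3)}$ to convergence in probability so that Slutsky's lemma applies.
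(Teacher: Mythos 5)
Your proof is correct and follows essentially the same route as the paper: you decompose the conditional second moment $\E{\cdot\mid\I_1}$ of the column sums, kill the cross term by pulling out the $\I$-measurable factor and using $\E{(f-\E{f\mid\I})\circ T_{0,k}\mid\I}=0$ together with $\I_1\subset\I$, and then add the limits using the constancy of $\eta_3^2$ (the paper states this last step without naming Slutsky, but it is the same argument).
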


\begin{proof}
Let
$$
  F_{i,v} = \frac1{\sqrt v} \sum_{j=1}^v f\circ T_{i,j}, \quad  H_{i,v} = \frac1{\sqrt v} \sum_{j=1}^v \E{f\mid\I}\circ T_{i,j}, \quad 
  \overline H_{i,v} = \frac1{\sqrt v} \sum_{j=1}^v \left(f-\E{f\mid\I}\right)\circ T_{i,j}.
$$
From  \cite{MR3913270}
  it follows that when $v\to\infty$
$$\E{F_{1,v}^2 \mid \I_1} \to \eta_1^2, \quad
\E{H_{1,v}^2 \mid \I_1} \to \eta_2^2\quad\text{and}\quad
\E{\overline H_{1,v}^2 \mid \I_1} \to \eta_3^2\;,$$
these convergences being in distribution.
By definition we have $F_{1,v} = H_{1,v} + \overline H_{1,v}$ and because $H_{1,v}$ is $\I$-measurable and $\E{\overline H_{1,v} \mid \I} = 0$, we have
$$
  \E{H_{1,v} \overline H_{1,v} \mid \I_1} = \E{ \E{H_{1,v} \overline H_{1,v} \mid \I} \mid \I_1 }= 
  \E{H_{1,v}  \E{\overline H_{1,v} \mid \I} \mid \I_1 } = 0,
$$
therefore
$$
\E{F_{1,v}^2 \mid \I_1}= \E{H_{1,v}^2 \mid \I_1}+\E{\overline H_{1,v}^2 \mid \I_1}
$$ 
and if one of these three random variables converges (in distribution) toward a constant we can affirm that
$$
  \eta_{1}^2 = \lim_{v\to\infty} \E{F_{1,v}^2 \mid \I_1} = \lim_{v\to\infty} \E{H_{1,v}^2 \mid \I_1} +  
  \lim_{v\to\infty} \E{\overline H_{1,v}^2 \mid \I_1} = \eta_{2}^2 + \eta_{3}^2.
$$
\end{proof}

\subsection{Example of a non-normal limit law, in the orthocomplement of the product factor}\label{nnormal_ex} Here we give an example of an ergodic $\Z^2$-action and a martingale difference $f$ which is orthogonal to the factor $\I$ but for which the limit distribution in the CLT is not normal.

Let us denote by $(\Omega,\B,\mu,S)$ the Bernoulli scheme
$\left(\frac12,\frac12\right)$ on the alphabet $\{-1,1\}$. This means
that $\Omega$ is the space of bilateral sequences of $-1$ or $1$,
equipped with the product $\sigma$-algebra, the probability measure
which makes the coordinate maps i.i.d.\ with law
$\left(\frac12,\frac12\right)$, and the shift $S$.

Another dynamical system is $Z=\{-1,1\}$ equipped with the permutation $U$ and the uniform probability.

On the product probability space $\Omega\times\Omega\times Z$, consider the $\Z^2$-action $T$ defined by
$$
T_{i,j}(\omega,\omega',z)=\left(S^i\omega,S^j\omega',U^{i+j}z\right).
$$
 This space is equipped with the {\it natural} filtration $\left(\F_{i,j}\right)$ :
 $$
 \F_{i,j}=\sigma\left(X_k,Y_\ell,z ; k\leq i, \ell\leq j\right)=\sigma\left(X_k ; k\leq i\right)\otimes\sigma\left(Y_\ell ;  \ell\leq j\right)\otimes\P(Z).
 $$
 where of course $X_k\left((\omega_n)_{n\in\Z}\right)=\omega_k$ et $Y_\ell\left((\omega'_m)_{m\in\Z}\right)=\omega'_\ell$ are the two independent Bernoulli processes.
 
The $\sigma$-algebra $\I_1$ of $T_{1,0}$-invariants is the $\sigma$-algebra of events depending only on the second coordinate :
$$
\I_1= (\text{trivial $\sigma$-algebra of $\Omega$})\otimes\B\otimes(\text{trivial $\sigma$-algebra of  $Z$}).
$$
And symmetrically for the $\sigma$-algebra $\I_2$ of $T_{0,1}$-invariants.
So that we have
 $$\I=\B\otimes\B\otimes(\text{trivial $\sigma$-algebra of  $Z$}).$$

We consider now the random variable $f(\omega,\omega',z)=X_0(\omega)Y_0(\omega')z$.

Since the expectation of $z$ is zero, we have $\E{f\mid\I}=0$. Associated to the $\Z^2$-action and the natural filtration, the function $f$ is a martingale difference.

But it is easy to calculate the limit distribution of $\left(\frac1{\sqrt{nm}}\sum_{i=1}^n\sum_{j=1}^m f\circ{T_{i,j}}\right)$. Indeed,
$$
\frac1{\sqrt{nm}}\sum_{i=1}^n\sum_{j=1}^m f\circ{T_{i,j}}(\omega,\omega',z)=\left(\frac1{\sqrt{n}}\sum_{i=1}^n(-1)^iX_i(\omega)\right)\left(\frac1{\sqrt{m}}\sum_{j=1}^m(-1)^jY_j(\omega')\right)z\;.
$$
The limit distribution is the distribution of the product of two independent $\mathcal N(0,1)$ random variables, which is a non-normal Bessel law.

\subsection{Remark on the asymptotic condition insuring a normal law}
\label{trivial-queue}

When looking to Proposition \ref{queue-condition} the following question appears naturally.
Is it true that
\begin{equation} \label{eq:bad_limit}
\bigcap_{\ell\geq0}\left(\F_{\infty,-\ell}\vee\I_1\right) = \F_{\infty,-\infty}\vee\I_1 \quad ?
\end{equation}

We give here a negative answer, by the construction of an example. (Note however that, in this example the transformation $T_{2,0}$ is the identity so there will not exist any non zero martingale difference.)

The space is the bidimensional torus $\T^2=\R^2/\Z^2\simeq[0,1[^2$ equipped with the Borel $\sigma$-algebra and the Lebesgue measure.
The transformation $T_{0,1}$ is the automorphism defined by the matrix
$\begin{pmatrix}
3&1\\2&1
\end{pmatrix}$,
that is $T_{0,1}(x,y)=(3x+y,2x+y)$, and the transformation $T_{1,0}$ is the translation $T_{1,0}(x,y)=\left(x+\frac12,y\right)$.

Transformations $T_{0,1}$ et $T_{1,0}$ are commuting :
$$
T_{1,0}T_{0,1}(x,y)=T_{0,1}T_{1,0}(x,y)=(3x+2y+\frac12,2x+y),
$$
so they generate a $\Z^2$-action denoted by $T$.

Let us denote by $\P$ the partition $\left\{\left[0,\frac12\right[\times[0,1[,\left[\frac12,1\right[\times[0,1[\right\}$ of the torus, and by $\left(\F_j\right)$ the filtration generated by the transformation $T_{0,1}$ and this partition :
$$
\F_j=\sigma\left(T_{0,-k}\left(\P\right),\ k\leq j\right)\;.
$$
We know that the measure preserving dynamical system $\left(\T^2,T_{0,1}\right)$ has the Kolmogorov property, thus the limit $\sigma$-algebra $\F_{-\infty}=\cap_{j\in\Z}\F_j$ is trivial (modulo the measure). 

Note also that the partition $\P$ is invariant under the transformation $T_{1,0}$.

Define, for all $i,j\in\Z$, $\F_{i,j}=\F_j$.
\\
We have $\F_{i,j}\subset\F_{i',j'}$ if $j\leq j'$.\\
We have $\F_{i,j}\cap\F_{i',j'} = \F_{(i,j)\wedge(i',j')}$ and
$
\E{\E{f\mid\F_{i,j}}\mid\F_{i',j'}} = \E{f\mid\F_{(i,j)\wedge(i',j')}}
$
for all integrable function $f$.\\
Moreover, for all $i,j\in\Z$, $\F_{i,j}=T_{-i,-j}\left(\F_{0,0}\right)$.
\\
This means that we have a completely commuting invariant filtration.

The $\sigma$-algebra $\F_{\infty,-\infty}=\F_{-\infty}$ is trivial thus $\F_{\infty,-\infty}\vee\I_1=\I_1$.

Finally the following lemma shows that property \eqref{eq:bad_limit} is not satisfied.

\begin{lem}
For all $\ell\in\Z$, the $\sigma$-algebra  $\F_{\infty,\ell}\vee\I_1$ is the whole Borel algebra.
\end{lem}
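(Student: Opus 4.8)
The plan is to prove the equality by showing that $\F_{\infty,\ell}\vee\I_1$ separates $\mu$-almost every pair of distinct points of $\T^2$, which forces it to coincide with the Borel $\sigma$-algebra $\B$. First I would describe $\I_1$ concretely. Since $T_{1,0}$ is the involution $\iota\colon(x,y)\mapsto(x+\tfrac12,y)$, a function is $\I_1$-measurable if and only if it is a function of $(2x\bmod 1,\,y)$; equivalently $\I_1=\sigma(2x\bmod 1,\,y)$, so $\I_1$ determines a point only up to $\iota$. Thus $\I_1$ already separates every pair of points except the pairs $\{(x,y),\iota(x,y)\}$ lying in a common $T_{1,0}$-orbit, and every $\I_1$-measurable set is constant on each such orbit. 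Consequently it suffices to exhibit, inside $\F_{\infty,\ell}=\F_\ell$, a single set taking different values at $(x,y)$ and at $\iota(x,y)$ for a.e.\ $(x,y)$.

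Next I would locate such a set among the generators of $\F_\ell$. Recall that $\F_\ell$ is generated by the functions $p_0\circ T_{0,k}$, where $p_0=\mathds{1}_{[0,1/2)}(x)$ codes the partition $\P$ and $k$ runs over an infinite set of integers bounded above by $\ell$ (the exact index convention is immaterial, since what follows holds for every $k\in\Z$). Here $T_{0,k}$ acts as $M^k$ on $\T^2$, with $M=\left(\begin{smallmatrix}3&1\\2&1\end{smallmatrix}\right)$. Writing $a_k=(M^k)_{11}$ for the top-left entry of $M^k$, one has $M^k\iota(x,y)=M^k(x,y)+\tfrac12\,M^k(1,0)^{\mathrm t}$, so on passing from $(x,y)$ to $\iota(x,y)$ the first coordinate of $M^k(x,y)$ is shifted by $\tfrac12 a_k\bmod 1$. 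The decisive input is a parity statement: reducing modulo $2$ gives $M\equiv\left(\begin{smallmatrix}1&1\\0&1\end{smallmatrix}\right)$, hence $M^k\equiv\left(\begin{smallmatrix}1&k\\0&1\end{smallmatrix}\right)\pmod 2$ and therefore $a_k$ is \emph{odd} for every $k\in\Z$. Thus $\tfrac12 a_k\equiv\tfrac12\pmod 1$, and since the first binary digit $\mathds{1}_{[0,1/2)}$ is flipped by a translation of $\tfrac12$, we obtain $(p_0\circ T_{0,k})\circ\iota=1-(p_0\circ T_{0,k})$ almost everywhere.

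Finally I would assemble the conclusion. Fix any admissible $k\le\ell$ (the index set is nonempty for every $\ell\in\Z$) and set $q=p_0\circ T_{0,k}\in\F_\ell$. The map $(x,y)\mapsto\bigl(2x\bmod 1,\,y,\,q(x,y)\bigr)$ is then injective for a.e.\ $(x,y)$: the first two coordinates pin down the $T_{1,0}$-orbit $\{(x,y),\iota(x,y)\}$ through $\I_1$, and since $q$ takes the two opposite values $0$ and $1$ on that orbit, it selects which of the two points we are at. Hence $\I_1\vee\sigma(q)=\B$, and a fortiori $\F_{\infty,\ell}\vee\I_1=\B$, which is the assertion of the lemma.

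The only genuinely arithmetic ingredient, and the step I expect to drive the whole example, is the parity computation $a_k\equiv1\pmod 2$ valid for \emph{all} $k\in\Z$, including the negative powers: it is exactly this that guarantees a separating generator survives in $\F_\ell$ for every $\ell$, and in particular for the arbitrarily negative indices relevant to the failure of \eqref{eq:bad_limit}. Everything else is the routine verification that a function coding the first binary digit of $x$, combined with the $\iota$-invariant data carried by $\I_1$, reconstructs the full Borel $\sigma$-algebra up to null sets.
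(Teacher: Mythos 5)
Your proof is correct and takes essentially the same route as the paper's: the key point in both is that the top-left entry $a_k$ of $M^k$ is odd for every $k\in\Z$, so the generating function $\ind{[0,1/2)}(a_\ell x+b_\ell y)$ of $T_{0,-\ell}\pr{\P}\subset\F_\ell$ flips under the half-translation $\iota$ and, combined with the $\iota$-invariant data $(2x\bmod 1,\,y)$ that constitutes $\I_1$, pins down the point. You finish with an a.e.\ point-separation (Lusin--Souslin) argument, whereas the paper reconstructs the characters of $\T^2$ explicitly; this is only a cosmetic difference.
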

\begin{proof}
Let us show that, for any $\ell$, the $\sigma$-algebra generated by $\I_1$ and the partition $T_{0,-\ell}\left(\P\right)$ is the whole Borel algebra.

For each integer $n$, consider
$$
\begin{pmatrix}3&1\\2&1\end{pmatrix}^n=\begin{pmatrix}a_n&b_n\\c_n&d_n\end{pmatrix}
$$

A straightforward induction shows that, for all $n\in\Z$, the number $a_n$ is odd and the number $c_n$ is even.

Any measurable function of the two-dimensional variable $(2x,y)$ is $\I_1$ measurable and the map $(x,y)\mapsto\ind{\left[0,\frac12\right[}(a_\ell x+b_\ell y)$ is $T_{0,-\ell }\left(\P\right)$-measurable. By multiplication, we obtain that the map $(x,y)\mapsto\ind{\left[0,\frac12\right[}(x)$ is $T_{0,-\ell }\left(\P\right)\vee\I_1$-measurable. Using the representation $\mathbb T=[0,1[$ we write
$$
\exp(2i\pi x)=\exp\left(2i\pi\frac12(2x\text{ mod}1)\right)\ind{\left[0,\frac12\right[}(x)-\exp\left(2i\pi\frac12(2x\text{ mod}1)\right)\ind{\left[\frac12,1\right[}(x).
$$
This shows that any character of the two dimensional torus is $T_{0,-\ell }\left(\P\right)\vee\I_1$ measurable, proving that this $\sigma$-algebra is the whole Borel algebra.

\end{proof}

\underbar{Remark}. The idea behind the previous construction owes a great deal to an example attributed to Jean-Pierre Conze and communicated to us by Jean-Paul Thouvenot : if we denote by $\B(\alpha)$ the $\sigma$-algebra of Borel subsets of the one-dimensional torus invariant by the translation $x\mapsto x+\alpha$, we have that the algebra 
$
\bigcap_{n\in\N}\B(2^{-n})$ and $\bigcap_{n\in\N}\B(3^{-n})
$
are trivial (modulo the Lebesgue measure), but for each $n$ the $\sigma$-algebra
$
\B(2^{-n})\vee\B(3^{-n})
$ is the whole Borel algebra.\\
\providecommand{\bysame}{\leavevmode\hbox to3em{\hrulefill}\thinspace}
\providecommand{\MR}{\relax\ifhmode\unskip\space\fi MR }
\providecommand{\MRhref}[2]{%
  \href{http://www.ams.org/mathscinet-getitem?mr=#1}{#2}
}
\providecommand{\href}[2]{#2}

\textsc{Davide Giraudo} : Institut de Recherche Math\'ematique Avanc\'ee, UMR 7501, Universit\'e de Strasbourg and CNRS, France.

davide.giraudo@math.unistra.fr\\

\textsc{Emmanuel Lesigne} : Institut Denis Poisson, UMR 7013, Universit\'e de Tours and CNRS, France.

emmanuel.lesigne@ispoisson.fr\\

 \textsc{Dalibor Voln\'y} : Laboratoire de Math\'ematiques Rapha\"el Salem, UMR 6085, Universit\'e de Rouen Normandie and CNRS, France.

dalibor.volny@univ-rouen.fr


\begin{thebibliography}{1}

\bibitem{MR0233396}
P.~Billingsley, \emph{Convergence of probability measures}, John Wiley \& Sons,
  Inc., New York-London-Sydney, 1968. \MR{0233396}

\bibitem{MR3504508}
Ch. Cuny, J.~Dedecker, and D.~Voln\'{y}, \emph{A functional {CLT} for fields of
  commuting transformations via martingale approximation}, Zap. Nauchn. Sem.
  S.-Peterburg. Otdel. Mat. Inst. Steklov. (POMI) \textbf{441} (2015),
  239--262. \MR{3504508}

\bibitem{MR3869881}
D. Giraudo, \emph{Invariance principle via orthomartingale approximation},
  Stoch. Dyn. \textbf{18} (2018), no.~6, 1850043, 29. \MR{3869881}

\bibitem{MR4125956}
M. Peligrad and Dalibor Voln\'{y}, \emph{Quenched invariance principles for
  orthomartingale-like sequences}, J. Theoret. Probab. \textbf{33} (2020),
  no.~3, 1238--1265. \MR{4125956}

\bibitem{MR2472010}
E. Rio, \emph{Moment inequalities for sums of dependent random variables
  under projective conditions}, J. Theoret. Probab. \textbf{22} (2009), no.~1,
  146--163. \MR{2472010}

\bibitem{MR0928378}
D. Voln\'{y}, \emph{Martingale decompositions of stationary processes},
  Yokohama Math. J. \textbf{35} (1987), no.~1-2, 113--121. \MR{928378}

\bibitem{MR3427925}
\bysame, \emph{A central limit theorem for fields of martingale differences},
  C. R. Math. Acad. Sci. Paris \textbf{353} (2015), no.~12, 1159--1163.
  \MR{3427925}

\bibitem{MR3913270}
\bysame, \emph{On limit theorems for fields of martingale differences},
  Stochastic Process. Appl. \textbf{129} (2019), no.~3, 841--859. \MR{3913270}

\bibitem{MR3222815}
Y. Wang and M. Woodroofe, \emph{A new condition for the invariance
  principle for stationary random fields}, Statist. Sinica \textbf{23} (2013),
  no.~4, 1673--1696. \MR{3222815}

\end{thebibliography}
\end{document}